\newtheorem{theorem}{Theorem}
\newtheorem{lemma}{Lemma}
\newtheorem{definition}{Definition}
\newtheorem{remark}{Remark}
\newcommand{\RR}{\mathbb R}
\newcommand{\NN}{\mathbb N}
\newcommand{\ZZ}{\mathbb Z}
\newcommand{\EE}{\mathcal{E}}
\begin{document}

\title{Fixed Points Theorems in Hausdorff M-distance Spaces}

\author{Vladyslav Babenko}
\author{Vira Babenko}
\author{Oleg Kovalenko}
	
\begin{abstract}
We prove fixed point theorems in a space with a distance function that takes values in a partially ordered monoid. On the one hand, such an approach allows one to generalize some fixed point theorems in a broad class of spaces, including metric and uniform spaces. On the other hand, compared to the so-called cone metric spaces and $K$-metric spaces, we do not require that the distance function range has a linear structure. We also consider several applications of the obtained fixed point theorems. In particular, we consider the questions of the existence of solutions of the Fredholm integral equation in $L$-spaces.
\end{abstract}

\keywords{Fixed point theorem, distance space, monoid-valued distance.}

\maketitle
	
\section{Introduction}
Fixed Point Theory is a well developed domain of Analysis and Topology (see books~\cite{Granas,Kirk,carl2014fixed} and references therein). It has a numerous amount of applications, in particular in  Numerical Analysis, the Theory of ODEs and PDEs, Integral Equations, Mathematical, Logic Programming, and others. 

One of the most well known fixed point theorems is the contraction mapping theorem in metric spaces, which goes back to Picard, Banach and Caccioppoli. The contraction mapping principle was generalized in various directions, see, e.g. mentioned above books and surveys~\cite{Zabrejko, Jancovic}.
 A majority of the contraction mapping theorems use the following scheme of the proof. Consider  a Picard sequence 
$
\{x_0,x_1=f(x_0),\ldots,x_n=f(x_{n-1}),\ldots\,\}\;
$
 and using the fact that  $f$ is contractive in some sense, establish that it is a Cauchy sequence. Using completeness of the space, obtain a point $x$, such that $x_n\to x$, as $n\to\infty.$ Continuity (in some sense) of $f$ gives $f(x)=x$. 

Such scheme can be (and in many cases was) realized in many spaces, which are more general than usual metric spaces. Among such spaces are various generalizations of metric spaces with co-domain $\RR_+$ (see~\cite{Hitzler,Jleli,Roldan,VillaMorales,JS2,Kucha} and references therein).

Spaces with metric that takes values in partially ordered vector spaces were introduced by Kurepa~\cite{Kurepa} in~1934. Later Kantorovich developed a theory of normed spaces with norms that take values in complete vector latices, see~\cite{Kantorovich}. These objects appeared to be fruitful in the study of functional equations using iterative methods, and in related questions of Analysis, see~\cite{KVP}. 
 Results and references on the fixed point theorems for the metric spaces with metric that takes values in a partially ordered vector spaces ($K$-metric spaces and cone-metric spaces) can be found in surveys~\cite{Zabrejko, Jancovic}. In particular, in these articles the question whether results on cone metric or $K$-metric spaces can be reduced to results on ordinary metric spaces, is discussed.
 Parametric metric spaces~\cite{Husseinatall}, probabilistic metric spaces~\cite{Schweizer,HadzicPap}, fuzzy metric spaces~\cite{Schweizer,Rakic} and intuitionistic metric spaces (see for references~\cite{Husseinatall, Yadav}), gauge spaces {(see~\cite[Chapter~6]{Kelley})} also were investigated in Fixed Point Theory.
 
A more general class of spaces is the class of distance spaces, or symmetries (or semi-metric spaces).
Some results regarding the fixed point theorems in real-valued distance spaces can be found in~\cite{Kirk, Choban, ChobanBerinde,Arandelovichatall,Suzuki} (see also references therein).
 Some results on fixed points in  cone distance spaces can be found in~\cite{Radenovic_Kadelburg,Zhang}.

Different definitions of the notion of contraction were considered in Fixed Point Theory (for survey and references see~\cite{Granas,Kirk,kirk_shahzad,Wong}), including non-linear, Meir-Keeler, Caristi, \'Ciri\'c, Caccioppolli contractions, and others.

We consider contraction mappings theorems in distance spaces with distances that take values in a partially ordered monoid, so that there might be no linear structure in the range of the distance function. We restrict ourselves to consideration of contractions of Meir--Keeler type, Caristi type and introduced in Section~\ref{s::sequentialContractions} sequential non-linear contractions of \'Ciri\'c and Caccioppoli types.

Note that each uniform space with separating axiom can be metrized with a metric that takes values in a partially ordered monoid, see  Section~\ref{s::examples}. {In view of this fact the result by Taylor~\cite{Taylor} about contraction mapping theorems in uniform spaces becomes a partial case of Theorem~\ref{th::M-KContraction}}. As it is well known, metrization of a uniform space with a real-valued metric is possible only for the uniform spaces with countable base. Questions of metrization of a uniform spaces with $K$-space valued metrics were considered in~\cite{Kusraev}.

The article is organized as follows. 
In Section~\ref{s::MDistanceSpaces} we adduce necessary definitions of a partially ordered monoid $M$, $M$-valued distance functions $d_{X,M}(x,y)$ in a set $X$, the notion of a series in $M$. In order to introduce convergence in an $M$-distance space, we define a family $Z(M)$ of null sequences in the monoid $M$ and say that $x_n\to x$ as $n\to\infty$ if $\{ d_{X,M}(x_n,x)\}\in Z(M)$. We also consider an example of a special family of null sequences $Z_{\EE}(M)$ and examples of $M$-distance spaces. {In order to prove fixed point theorems, we need the completeness of the space and the uniqueness of the limit of sequences. We give two generally speaking different definitions of completeness and briefly discuss their interconnection. We call Hausdorff the distance spaces that have the uniqueness of the limit property. We also discuss various conditions that guarantee Hausdorffness and provide examples of such spaces.}

In Section~\ref{s::contractionTheorems} 
we prove {Meir--Keeler type, Caristi type} fixed point theorems and fixed points theorems  for the mappings defined on Hausdorff $M$-distance spaces and partially ordered $M$-distance spaces that satisfy sequential contraction conditions of \'Ciri\'c and Caccioppoli types.

In Section~\ref{s::applications} we consider several applications of the obtained fixed point theorems. In particular, we consider the 
 questions of existence of solutions of the Fredholm integral equation in $L$-spaces (i.e. semi-linear metric spaces with two additional axioms, which connect the metric with the algebraic operations), see~\cite{Vahrameev, Babenko19, Babenko20, Babenko21}. We obtain significantly more general than in~\cite{Babenko19} conditions for existence and uniqueness of a solution of the  Fredholm integral equation. Observe that the proof of this result essentially uses a fixed point theorem applied to a distance space of continuous functions with a non-real valued metric, even though it has a usual real-valued metric. This example shows that consideration of various generalizations of metric spaces might give applications even in ordinary metric spaces.
In this section we also discuss possibilities to apply the obtained in Section~\ref{s::contractionTheorems} results in order to obtain contraction mapping theorems for random fixed points (see e.g.~\cite{Nietoatall}) and multiple fixed points (see e.g.~\cite{ChobanBerinde}).

\section{Partially ordered monoids. Null sequences. Cauchy series.}\label{s::MDistanceSpaces}
\begin{definition}
A set $M$ with associative binary operation $+$ is called a monoid, if there exists  $\theta_M\in M$ such that $\theta_M+x=x=x+\theta_M$ for all $x\in M$.

\end{definition}

\begin{definition}
A set $X$ is called partially ordered, if for some pairs of elements $x,y\in X$ the relation $x\le y$  is defined and is reflexive, transitive, and antisymmetric.
\end{definition}

\begin{definition}
{A monoid $M$ is called a partially ordered monoid, if it is a partially ordered set,} $M_+:=\{ x\in M\colon \theta_M\le x\}\neq \{\theta_M\},$ and the following condition holds: 
$$
    \text{If } x_1\leq y_1 \text{ and } x_2\leq y_2 \text{ then } x_1+x_2\leq y_1+y_2.
$$
\end{definition}

\begin{definition}
We say that the Riesz property holds in a partially ordered space $X$, if for each pair of elements $x,y\in X$  there exists an element $x\vee y\in X$ (which is called the supremum of $x$ and $y$) that satisfies the following properties:
\begin{enumerate}
    \item $x\leq x\vee y$ and $y\leq x\vee y$;
    \item If $z\in X$ is such that $x\leq z$ and $y\leq z$, then $x\vee y\leq z$.
\end{enumerate}
\end{definition}

\begin{definition}
For a partially ordered monoid $M$ denote by $Z(M)$ a family of sequences 
$
\{x_1,\ldots, x_n,\ldots\}\subset M_+
$
such that the following properties hold:
\begin{enumerate}
 \item $\{\theta_M,\ldots, \theta_M,\ldots\}\in Z(M)$ and if $x\in M_+$, $\{x,\ldots, x,\ldots\}\in Z(M)$, then $x=\theta_M$.
    \item If $\{x_n\}, \{y_n\}\in Z(M)$, then $\{x_n+y_n\}\in Z(M)$.
    \item If $\{x_n\}\in Z(M)$ and $\theta_M\leq y_n\leq x_n$ for all $n\in \NN$, then $\{y_n\}\in Z(M)$.
     \item Each subsequence $\{x_{n_k}\}$ of a sequence $\{x_n\}\in Z(M)$ belongs to $Z(M)$.
\end{enumerate}
$Z(M)$ will be called a family of null sequences in $M$.
\end{definition}

 As important example is the family $Z_{\EE}(M)$.
 \begin{definition}\label{def::solidM+}
    Assume that a partially ordered monoid $M$ contains a non-empty set $\mathcal{E}\subset  M_+\setminus \{ \theta_M \}$ that satisfies the following conditions:
    \begin{enumerate}
        \item If $x\in M_+$ is such that for all $\varepsilon\in \mathcal{E}$ one has $x\le \varepsilon$, then $x=\theta_M$.
    \item For each $\varepsilon\in \mathcal{E}$ there exists $\delta\in \mathcal{E}$ such that $\delta + \delta \leq \varepsilon$.
    \end{enumerate}
Define a family of null sequences $Z_{\EE}(M)$ as the family of sequences  $\{x_n\}\subset M_+$ with the following property:  
for any $\varepsilon\in \mathcal{E}$ there  exists $N \in \NN$ such that  $x_n < \varepsilon$ for all $n\geq N$.

\end{definition}


{
\begin{remark}\label{r::realNullSeqs}
The usual convergence to zero in the space $\RR_+$ is determined for example by the family $Z_{\EE}(\RR_+)$ with $\EE = (0,1)$.
\end{remark}
}

We also need the notion of a series in a partially ordered monoid. As usual, for a sequence $\{x_n\}\subset M_+$ the symbol 
\begin{equation}\label{seriesDef}
 \sum\nolimits\nolimits\nolimits_{k=1}^\infty x_k
\end{equation}
is called a series. The sum $s_n=\sum\nolimits\nolimits_{k=1}^nx_k$, $n\in \NN$, is called a partial sum of the series.

\begin{definition}\label{def::CauchySeries}
We say that series~\eqref{seriesDef} is a Cauchy series, 
if for all increasing sequences of natural numbers $\{n_k\}$ and $\{m_k\}$ such that $m_k\geq n_k$ for all $k\in\NN$, one has
$
\left\{\sum\nolimits\nolimits_{s=n_k}^{m_k} x_s\right\} \in Z(M).
$
By $\mathcal{CS}$ we denote the set of all sequences $\{x_n\}\subset M$ such that $\sum\nolimits\nolimits_{k=1}^\infty x_k$ is a Cauchy series.
\end{definition}
The notion of a Cauchy series is enough for our purposes. It is related to but different from the notion of a convergent series, which is well studied for normed linear spaces. Notice that the usual definition for convergence of a series requires a definition for convergence of an arbitrary sequence of elements. The introduced notion of a Cauchy series uses a family of null sequences in $M$, which in fact determines only convergence of sequences to $\theta_M$. Of course in the presence of a linear structure in $M$, convergence to an arbitrary element can be reduced to the convergence to $\theta_M$. In the case of a rather arbitrary set $M$, this is not the case.

\section{M-distance spaces. Hausdorffness. Completeness.}
\subsection{M-valued distances. Convergence. Hausdorffness }
\begin{definition}
Let $X$ be a set and $M$ be a partially ordered monoid. A mapping $d_{X,M}(\cdot,\cdot)\colon X\times X\to M_+$ such that $d_{X,M}(x,y) = d_{X,M}(y,x)$ for all $x,y\in X$ is called an $M$-valued
\begin{enumerate}
\item  Dislocated distance in $X$,  if  
$     d_{X,M}(x,y)=\theta_M \implies x=y;$
\item Distance in $X$,  if \hspace{1.5cm}
$
     d_{X,M}(x,y)=\theta_M\iff x=y;
$
\item Pseudo-distance in $X$, if \hspace{0.4cm}
$
    x= y \implies d_{X,M}(x,y)=\theta_M.
$
\end{enumerate}
 
The pair $(X,d_{X,M})$ is called a dislocated $M$-distance space, $M$-distance space, and $M$-pseudo-distance space respectively.
\end{definition}

In what follows we mostly deal with dislocated $M$-distance spaces and will call them $DM$-distance spaces for brevity.  A generalization of a classical example of dislocated metric spaces is obtained, if we set  $d_{M,M}(x,y)=x\vee y$, provided the Riesz property holds in $M$. We refer to~\cite{Hitzler} and references therein for a discussion of the history and motivation for investigations of dislocated spaces.

{Below we define some notions for dislocated $M$-distance spaces. Analogous notions can be defined in $M$-distance and $M$-pseudo-distance spaces; we do not do this explicitly.}

\begin{definition}
Let $(X, d_{X,M})$ be a $DM$-distance space, and let a family of null sequences $Z(M)$ in $M_+$ be chosen. 
The triple $(X, d_{X,M},Z(M))$ is called an equipped  $DM$-distance space ($EDM$-distance space for short).
\end{definition}

\begin{definition} 
Let $(X, d_{X,M},Z(M))$
be an $EDM$-distance space. We say that a sequence $\{ x_n\}\subset X$ converges to $x\in X$, and write $x_n\to x$ as $n\to \infty$, if  $\{d_{X,M}(x_n,x)\}\in Z(M)$.
\end{definition}
From the properties of the family of null sequences it follows that if $x_n\to x$ as $n\to \infty$, then each subsequence $\{x_{n_k}\}$ converges to $x$.

\begin{remark}
It is worth noting that different choices of the family of null sequences $Z(M)$ give generally speaking different $EDM$-distance spaces in the sense that depending of the family $Z(M)$, one obtains generally speaking different classes of converging  sequences. 
\end{remark}

{For example, two extremal situations occur in the case of degenerate families $Z(M)$ of null sequences. If $Z(M)$ contains only one sequence $\{\theta_M, \theta_M,\ldots\}$, then only sequences $\{x, x,\ldots,\}$ such that $d_{X,M}(x,x) = \theta_M$ are converging (to $x$). The other extremal situation occurs when $Z(M)$ contains all sequences of elements from $M_+$. In such situation each sequence in $X$ converges to any element $x\in X$.}

\begin{definition}
An $EDM$-distance space is called Hausdorff, if each converging sequence has a unique limit.
\end{definition}
Observe that  generally speaking one can't expect an $M$-pseudo-distance space to be Hausdorff (even if the distance function satisfies some kind of triangle inequality). However, if we have an enough rich family of pseudo-distances, then we can construct a Hausdorff $EDM$-distance space. On this path, we build a generalization of gauge spaces, see Example~\ref{ex::gaugeSpaces} in Section~\ref{s::examples}.

Conditions on distance functions that allow to preserve various properties of limit for reals were considered even before the notion of a metric space was introduced. We refer to articles~\cite{Frechet06,Hildebrandt,Pitcher,Wilson} which contain developments in this direction  (see also ~\cite{Arandelovic,Arandelovichatall} and references therein).
 Below we present several types of sufficient conditions for an $EDM$-distance space to be Hausdorff.

\subsection{Hausdorffness conditions}

\subsubsection{M-metric spaces and their generalizations}\label{s::zetaMetricSpace}

There are many different generalizations of the notion of a metric space in the case of real-valued metrics. Some references can be found in~\cite{Jleli,Roldan, VillaMorales,JS2,Kucha}. For $M$-valued metrics we adduce a generalization of a recent extension of the notion of a metric (see~\cite{JS2,Kucha}), which seems to be new even in the case $M=\RR$.

Let $\mathcal{M}$ be $M_+$ or $M_+\setminus \{\theta_M\}$.
Assume two functions 
$\phi\colon \mathcal{M}\to  \mathcal{M}\;\text{and}\; \zeta\colon \mathcal{M}^2\to \mathcal{M},$ 
satisfy the following conditions: if  $\{\alpha_n\},\{ \beta_n\}\subset \mathcal{M}$, then
\begin{equation}\label{phiIsNull}
    \{\phi(\alpha_n)\}\in Z(M)\implies \{\alpha_n\}\in Z(M)
\end{equation}
and 
\begin{equation}\label{zetaIsNullSq}
    \{\alpha_n\},\{\beta_n\}\in Z(M) \implies  \{\zeta (\alpha_n,\beta_n)\}\in Z(M).
\end{equation}
\begin{definition}
We say that an $EDM$-distance space is a 
$ED\mathcal{M}_{(\phi,\zeta)}$- metric space, if for arbitrary $x,y,z\in X$ such that $d_{X,M}(x,y),d_{X,M}(x,z),d_{X,M}(y,z)\in \mathcal{M}$ 
\begin{equation}\label{phi_psi}
\phi (d_{X,M}(x,y))\le \zeta (d_{X,M}(x,z),d_{X,M}(z,y)).    
\end{equation}
\end{definition}
\begin{lemma}
Each $ED\mathcal{M}_{(\phi,\zeta)}$- metric space is Hausdorff.
\end{lemma}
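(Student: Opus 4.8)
The plan is to verify the defining property of Hausdorffness directly: a convergent sequence cannot have two distinct limits. So I would take a sequence $\{x_n\}\subset X$ with $x_n\to x$ and $x_n\to y$, which by definition means $\{d_{X,M}(x_n,x)\}\in Z(M)$ and $\{d_{X,M}(x_n,y)\}\in Z(M)$, and argue that $x=y$. It is convenient to proceed by contradiction: assume $x\neq y$. By the defining property of a dislocated distance, $d_{X,M}(x,y)=\theta_M$ would force $x=y$; hence $d_{X,M}(x,y)\neq\theta_M$, and in particular $d_{X,M}(x,y)\in M_+\setminus\{\theta_M\}\subseteq\mathcal{M}$ no matter which of the two admissible choices $\mathcal{M}$ happens to be.

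The core of the argument is to feed the running index into the $(\phi,\zeta)$-triangle inequality \eqref{phi_psi} with $z=x_n$, obtaining
$$\phi(d_{X,M}(x,y))\le \zeta(d_{X,M}(x,x_n),d_{X,M}(x_n,y))$$
for every $n$ for which the three distances involved lie in $\mathcal{M}$. Since $\{d_{X,M}(x,x_n)\}$ and $\{d_{X,M}(x_n,y)\}$ are null sequences, condition \eqref{zetaIsNullSq} shows that the right-hand side, viewed as a sequence in $n$, belongs to $Z(M)$. The left-hand side is the constant sequence $\{\phi(d_{X,M}(x,y))\}$, which is squeezed between $\theta_M$ and this null sequence; by the domination property of a family of null sequences it is itself in $Z(M)$. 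Reading this constant null sequence as $\{\phi(\alpha_n)\}$ with $\alpha_n\equiv d_{X,M}(x,y)$ and applying \eqref{phiIsNull} gives $\{d_{X,M}(x,y)\}\in Z(M)$; being a constant sequence, it must equal $\theta_M$ by the property that a constant null sequence is trivial. This contradicts $d_{X,M}(x,y)\neq\theta_M$ and finishes the proof.

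The one genuine obstacle is making the inequality \eqref{phi_psi} applicable in the case $\mathcal{M}=M_+\setminus\{\theta_M\}$, where one must additionally know that the mixed distances $d_{X,M}(x,x_n)$ and $d_{X,M}(x_n,y)$ differ from $\theta_M$. I would dispose of the degenerate possibilities first. If $d_{X,M}(x,x_n)=\theta_M$ for infinitely many $n$, then $x_n=x$ for those indices, so along that subsequence $d_{X,M}(x_n,y)=d_{X,M}(x,y)$; this is a constant subsequence of the null sequence $\{d_{X,M}(x_n,y)\}$, hence lies in $Z(M)$ by the subsequence property, and the constant-sequence property forces $d_{X,M}(x,y)=\theta_M$, again a contradiction. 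The case $d_{X,M}(x_n,y)=\theta_M$ for infinitely many $n$ is symmetric. In the remaining case both mixed distances are nonzero from some index $N$ on, so passing to the tail $n\ge N$ (still a null sequence by the subsequence property) makes all three distances lie in $\mathcal{M}$, and the core argument above applies verbatim. When $\mathcal{M}=M_+$ this case distinction is unnecessary, since then the inequality is available for every $n$ at once.
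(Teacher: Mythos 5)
Your proof is correct and follows essentially the same route as the paper's: the same case split (either both mixed distances are eventually nonzero, in which case the $(\phi,\zeta)$-inequality with $z=x_n$ squeezes the constant sequence $\{\phi(d_{X,M}(x,y))\}$ into $Z(M)$ and \eqref{phiIsNull} finishes; or a constant subsequence appears and the subsequence plus constant-sequence properties of $Z(M)$ finish directly). The only difference is that you spell out the easier case $\mathcal{M}=M_+$, which the paper omits as "simpler."
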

\begin{proof} We prove only the case $\mathcal{M}=M_+\setminus \{ \theta_M\}$; the second case is simpler.
Assume that a sequence $\{ x_n\}\subset X$ is such that $x_n\to x, x_n\to y$ as $n\to\infty$, and $x\neq y$. Two cases are possible:
\begin{enumerate}
    \item There exists  $n_0\in \NN$  such that $d_{X,M}(x_n,x)\neq \theta_M$ and $d_{X,M}(x_n,y)\neq \theta_M$ for all  $n\ge n_0$.
    \item There exists a subsequence   $\{ x_{n_k}\}$ such that for all $k$ one has $x_{n_k}=x$ (or $x_{n_k}=y$; for definiteness we assume that $x_{n_k}=x$.)
\end{enumerate}
In the first case for all  $n\in\NN$
$$
\phi(d_{X,M}(x,y))\le \zeta(d_{X,M}(x_{n_0+n},x),d_{X,M}(x_{n_0+n},y)),
$$
 hence $\{\phi(d_{X,M}(x,y))\}\in Z(M)$, which by~\eqref{phiIsNull} implies $\{d_{X,M}(x,y)\}\in Z(M)$ and $x = y$, and thus leads to a contradiction.

In the second case we obtain a constant sequence $\{ x_{n_k}\}=\{ x\}$ that converges to $y$. This again implies $\{d_{X,M}(x,y)\}\in Z(M)$ and $x = y$, which is impossible.
\end{proof}

An important example of a  $EDM_{(\phi,\zeta)}$-metric space is obtained, if in~\eqref{phi_psi}  $\phi$ is the identity mapping. Inequality~\eqref{phi_psi} becomes
$$
d_{X,M}(x,y)\le \zeta (d_{X,M}(x,z),d_{X,M}(z,y)).    
$$
The obtained space in the case $\mathcal{M} = M_+$ will be called an $EDM_\zeta$-metric space.
If additionally  $\psi(\alpha,\beta)=\alpha+\beta$, then we obtain an $EDM$-metric space, with the previous inequality becoming
$$
d_{X,M}(x,y)\le d_{X,M}(x,z)+d_{X,M}(z,y). $$

$K$-metric, $K$-normed, and Cone metric spaces
are partial cases of $EDM$-metric spaces.  Parametric metric spaces,
probabilistic metric spaces
fuzzy metric spaces
and intuitionistic metric spaces,
gauge spaces
can also  be viewed as $EDM$-metric spaces. {Example~\ref{ex::gaugeSpaces} in Section~\ref{s::examples} demonstrates this statement for generalized gauge spaces.}

\subsubsection{Fr\'{e}chet-Wilson type conditions}
Observe that   the definition of Hausdorffness can be rewritten in the following equivalent way: if $\{x_n\}\subset X$ and $a,b\in X$, then
$$
\{d_{X,M}(x_n,a)\},\{d_{X,M}(x_n,b)\}\in Z(M)\implies \{d_{X,M}(a,b)\}\in Z(M).
$$

Consider the following generalizations of the Fr\'{e}chet-Wilson properties~\cite{Wilson}.
\begin{definition}\label{def::FWProps}
An $EDM$-distance space $(X,d_{X,M},Z(M))$ is said to satisfy the 
\begin{enumerate}
    \item Weak Fr\'{e}chet-Wilson property, if for any sequences $\{x_n\},\{y_n\}\subset X$, $z\in X$,
    $$
\{d_{X,M}(x_n,y_{n})\},\{d_{X,M}(y_{n},z)\}\in Z(M)\implies\{d_{X,M}(x_n,z)\}\in Z(M).
$$ 
\item Fr\'{e}chet-Wilson property, if for any sequences $\{ x_n\},\{ y_n\},\{ z_n\}\subset X$, 
$$
\{d_{X,M}(x_n,z_{n})\},\{d_{X,M}(z_{n},y_n)\}\in Z(M)\implies\{d_{X,M}(x_n,y_{n})\}\in Z(M).
$$
\item Strong Fr\'{e}chet-Wilson, if  for arbitrary sequence $\{x_n\}\subset X$,  and increasing sequences of natural numbers $\{n_k\}$ and $\{m_k\}$ such that $m_k> n_k$ for all $k\in\NN$,
$$
\left\{\sum\nolimits\nolimits_{s = n_k}^{m_k-1} d_{X,M}(x_s,x_{s+1})\right\}\in Z(M)\implies\left\{ d_{X,M}(x_{n_k},x_{m_k})\right\}\in Z(M).
$$
\end{enumerate}
\end{definition}

It is easy to see that the strong Fr\'{e}chet-Wilson property implies the Fr\'{e}chet-Wilson property, which implies the weak Fr\'{e}chet-Wilson property, which in turn implies Husdorffness of an $M$-distance space. Hausdorffness and the first two of the above properties were considered {in the case $M= \RR$ }in~\cite{Wilson} as different ways to relax the triangle inequality. It is not hard to construct examples that show that generally speaking no two of the three properties are equivalent. Moreover, it is easy to see that the strong Fr\'{e}chet-Wilson property is still weaker than the triangle inequality (in the sense that each $EDM$-metric space satisfies the strong Fr\'{e}chet-Wilson condition), and as Example~\ref{ex::strongFWIsNotEquivalent} from Section~\ref{s::examples} shows, it is not equivalent to  the Fr\'{e}chet-Wilson property.

\subsubsection{Hausdorffness conditions in terms of semi-continuity of the distance function}
\begin{definition}
Let an $EDM$-distance space $(X,d_{X,M}, Z(M))$ and $x,y\in X$ be given. We say that a  function $f\colon X\to M$ is semi-continuous from below at a point $x\in X$, if for each sequence $\{ x_n\}\subset X$ that converges to $x$ as $n\to\infty$, there exists a subsequence $\{ x_{n_k}\}$ and a null sequence $\{\alpha_k\}\in Z(M)$ such that for all $k\in\NN$
\begin{equation}\label{semicontinuousDistance}
  f(x)\le f(x_{n_k})+\alpha_k.
\end{equation}
\end{definition}
\begin{lemma}
$EDM$-distance space $(X,d_{X,M}, Z(M))$ is Hausdorff, provided for each $y\in X$ the function  $d_{X,M}(\cdot,y)$ is semi-continuous from below.
\end{lemma}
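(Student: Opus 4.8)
The plan is to use the reformulation of Hausdorffness recorded just above Definition~\ref{def::FWProps}: it suffices to show that whenever a sequence $\{x_n\}\subset X$ converges to both $a$ and $b$, i.e. $\{d_{X,M}(x_n,a)\},\{d_{X,M}(x_n,b)\}\in Z(M)$, one necessarily has $\{d_{X,M}(a,b)\}\in Z(M)$, where the latter is the constant sequence. By property~(1) of a family of null sequences this forces $d_{X,M}(a,b)=\theta_M$, and then the dislocated distance axiom gives $a=b$. So I would fix such a sequence and such points $a,b$ and aim to trap the constant value $d_{X,M}(a,b)$ between $\theta_M$ and a genuine null sequence.

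First I would apply the hypothesis to the function $f=d_{X,M}(\cdot,b)$, which is semi-continuous from below at $a$. Since $x_n\to a$, there exist a subsequence $\{x_{n_k}\}$ and a null sequence $\{\alpha_k\}\in Z(M)$ with
$$
d_{X,M}(a,b)=f(a)\le f(x_{n_k})+\alpha_k=d_{X,M}(x_{n_k},b)+\alpha_k\qquad\text{for all }k\in\NN.
$$
Next I would assemble the right-hand side as a null sequence using the closure axioms of $Z(M)$. From $x_n\to b$ we have $\{d_{X,M}(x_n,b)\}\in Z(M)$, and by property~(4) its subsequence $\{d_{X,M}(x_{n_k},b)\}$ is again in $Z(M)$; property~(2) then yields $\{d_{X,M}(x_{n_k},b)+\alpha_k\}\in Z(M)$. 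Since $d_{X,M}(a,b)\in M_+$, the constant sequence with this value satisfies $\theta_M\le d_{X,M}(a,b)\le d_{X,M}(x_{n_k},b)+\alpha_k$ for every $k$, so property~(3) forces $\{d_{X,M}(a,b)\}\in Z(M)$, and property~(1) finishes the argument as above.

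There is no analytic difficulty in this proof; its entire content is the correct bookkeeping with the four axioms defining a family of null sequences, and unlike the proof of the $ED\mathcal{M}_{(\phi,\zeta)}$ lemma no case split is needed, since semi-continuity from below supplies the controlling inequality directly. The one step that I expect to deserve care, and the only place a reader might stumble, is the final sandwiching: one must view the fixed element $d_{X,M}(a,b)$ as a \emph{constant} sequence dominated termwise by a null sequence so that property~(3) applies, and then invoke property~(1), which is precisely the axiom characterizing constant null sequences, to conclude that $d_{X,M}(a,b)=\theta_M$.
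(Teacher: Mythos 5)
Your proof is correct and follows essentially the same route as the paper's: apply semi-continuity of $d_{X,M}(\cdot,y)$ at the first limit point to obtain the inequality $d_{X,M}(x,y)\le d_{X,M}(x_{n_k},y)+\alpha_k$, then conclude via the closure properties of $Z(M)$ that the constant sequence $\{d_{X,M}(x,y)\}$ is null, hence $x=y$. The only difference is that you spell out explicitly which of the four axioms of a family of null sequences is invoked at each step, which the paper leaves implicit.
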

\begin{proof}
Let $x_n\to x$ and $x_n\to y$ as $n\to\infty$. Due to~\eqref{semicontinuousDistance} we obtain 
\begin{equation}\label{distanceSemiContinuity}
  d_{X,M}(x,y)\le d_{X,M}(x_{n_k},y)+\alpha_k,\,k\in\NN,
\end{equation}
  where $\{\alpha_k\}\in Z(M)$. 
Since $x_n\to y$, we obtain $\{ d_{X,M}(x_{n_k},y)\}\in Z(M)$. Thus $\{ d_{X,M}(x,y)\}\in Z(M)$,  $d_{X,M}(x,y)=\theta_M$ and $x=y$.
\end{proof}
Observe that semi-continuity of $d_{X,M}(\cdot ,y)$ (see inequality~\eqref{distanceSemiContinuity}) can be substituted by a more general inequality
\begin{equation}\label{zetaSemicontinuousDistance}
 d_{X,M}(x,y)\le\zeta( d_{X,M}(x_{n_k},y),\alpha_k),\,k\in\NN,
\end{equation}
where $\zeta$ satisfies~\eqref{zetaIsNullSq}. Moreover, instead of one $\zeta$ for all pairs $x,y\in X$, one can allow  $\zeta $ to depend on $x$ and $y$. 

In the case, when $M= \RR_+$, $Z(M)$ determines the usual convergence, and $\zeta(\alpha,\beta) = \xi(\alpha) + \beta$ with  monotone continuous at $0$ function $\xi$ such that $\xi(0) = 0$, condition~\eqref{zetaSemicontinuousDistance} can be rewritten in the following equivalent and more convenient form
$$
d_{X,M}(x,y) \leq \limsup\nolimits_{n\to\infty}\xi(d_{X,M}(x_{n_k},y)).
$$
In such form this condition appeared in~\cite{Jleli,Roldan,VillaMorales}.

\subsection{Cauchy sequences. Completeness}

\begin{definition}
A sequence $\{ x_n\}\subset X$ is called a Cauchy sequence, if for each increasing sequences of natural numbers $\{n_k\}$ and $\{m_k\}$ such that $m_k> n_k$ for all $k\in\NN$,  one has $\left\{d_{X,M}(x_{n_k},x_{m_k})\right\}\in Z(M).$ We denote by $\mathcal{CH}$ the family of all Cauchy sequences.
\end{definition}
The Fr\'{e}chet-Wilson property implies that each converging sequence $\{x_n\}$ belongs to $\mathcal{CH}$. In~\cite{Wilson} an example of a convergent but not a Cauchy sequence in a space (with $M = \RR$) that satisfies the weak Fr\'{e}chet-Wilson property  was constructed.
\begin{definition}
 We denote by $\mathcal{CW}$ the family of all sequences $\{x_n\}\subset X$ such that $$\{d_{X,M}(x_n,x_{n+1})\}\in \mathcal{CS}.$$
\end{definition}
The following statement is obvious.
\begin{lemma}\label{l::connection}
In each $EDM$-distance space that satisfies the strong Fr\'{e}chet-Wilson property, one has  $\mathcal{CW}\subset \mathcal{CH}$.
\end{lemma}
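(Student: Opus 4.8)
The plan is to unwind the definitions of $\mathcal{CW}$ and $\mathcal{CH}$ and let the strong Fr\'echet-Wilson property bridge the gap between them. Let $\{x_n\}\in\mathcal{CW}$, which by definition means $\{d_{X,M}(x_n,x_{n+1})\}\in\mathcal{CS}$, i.e. the series $\sum_{k=1}^\infty d_{X,M}(x_k,x_{k+1})$ is a Cauchy series. I want to show $\{x_n\}\in\mathcal{CH}$, i.e. that for every pair of increasing sequences of natural numbers $\{n_k\}$ and $\{m_k\}$ with $m_k>n_k$ for all $k$, one has $\{d_{X,M}(x_{n_k},x_{m_k})\}\in Z(M)$.

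First I would fix such sequences $\{n_k\}$ and $\{m_k\}$. Applying the definition of a Cauchy series (Definition~\ref{def::CauchySeries}) to the sequence $\{d_{X,M}(x_n,x_{n+1})\}$ with exactly these index sequences, I obtain that
$$
\left\{\sum\nolimits_{s=n_k}^{m_k-1} d_{X,M}(x_s,x_{s+1})\right\}\in Z(M).
$$
Here I must be careful to match the summation range in Definition~\ref{def::CauchySeries} with the one appearing in the strong Fr\'echet-Wilson property: the Cauchy series condition sums $x_s$ from $s=n_k$ to $m_k$, so choosing upper index $m_k-1$ (still an increasing sequence exceeding $n_k$, using $m_k>n_k$) yields precisely the partial-block sums $\sum_{s=n_k}^{m_k-1} d_{X,M}(x_s,x_{s+1})$ demanded as the hypothesis of the strong Fr\'echet-Wilson implication.

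Next I would simply invoke the strong Fr\'echet-Wilson property, which the space is assumed to satisfy: since the left-hand block sums lie in $Z(M)$, the property directly gives $\{d_{X,M}(x_{n_k},x_{m_k})\}\in Z(M)$. As this holds for arbitrary admissible $\{n_k\},\{m_k\}$, we conclude $\{x_n\}\in\mathcal{CH}$, which is the desired inclusion $\mathcal{CW}\subset\mathcal{CH}$.

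I expect no genuine obstacle here, consistent with the paper calling the statement obvious; the only point requiring care is the bookkeeping of summation indices, namely reconciling the ``$s=n_k$ to $m_k$'' range built into the Cauchy series definition with the ``$s=n_k$ to $m_k-1$'' range in the strong Fr\'echet-Wilson hypothesis, and verifying that the shifted upper indices still form an admissible increasing sequence. Once the indices are aligned, the proof is a one-line chaining of the two definitions through the assumed property.
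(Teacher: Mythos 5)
Your proof is correct and is precisely the one-line chaining of definitions that the paper has in mind when it declares the lemma obvious (the paper gives no explicit proof). The index bookkeeping you flag — applying the Cauchy-series definition with upper indices $m_k-1$, which remain increasing and satisfy $m_k-1\ge n_k$ because $m_k>n_k$ — is handled correctly.
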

At the same time this relation is not true in the general case, as shown by the following example. 
Let $X=\NN\cup\Omega\cup \{\infty\}$, where $\Omega=\{ \omega_1,\omega_2,\ldots \}$ is a countable set of different elements. Let also $M=\RR_+$, {and $Z(\RR_+)$ is the family of all non-negative sequences converging to $0$ in the usual sense.} Let $d_{X,\RR_+}(x,y)=0$ if $x=y$, and 
$$d_{X,\RR_+}(x,y)=d_{X,\RR_+}(y,x)=
\begin{cases}
1, & \text{if } x,y\in\NN, \text{ or } x,y\in \Omega, x\neq y,\\
\frac 1{n^2}, & \text{if $x=n\in \NN$ and $y\in \Omega\cup\{\infty\}$},\\
 \frac 1{j^2}, &\text{if $x=\omega_j\in \Omega$ and $y=\infty$}.
\end{cases}
$$
Consider the sequence $\{x_k\}=\{ 1,\omega_1, 2,\omega_2,\ldots , n,\omega_n\ldots\}$. It is easy to see that this sequence satisfies the following properties. The series $
\sum\nolimits\nolimits_{k=1}^\infty d_{X,\RR_+}(x_k,x_{k+1})
$ converges (in the usual sense for real series), and hence it is a Cauchy series in the sense of Definition~\ref{def::CauchySeries}. $\{x_k\}$ is not a Cauchy sequence, since $d_{X,\RR_+}(n,m)=1$ for all  $n,m\in \NN$, $n\neq m$. It converges to $\infty$.

 \begin{definition}
We call an $EDM$-distance space $(X,d_{X,M}, Z(M))$ $\mathcal{H}$-complete ($\mathcal{W}$-complete), if each sequence $\{x_n\}\in \mathcal{CH}$ (resp. $\{x_n\}\in \mathcal{CW}$)  converges to an element from the space $X$.
\end{definition}
Observe that Lemma~\ref{l::connection} implies that each $\mathcal{H}$-complete $EDM$-distance space that satisfies the strong Fr\'{e}chet-Wilson property is $\mathcal{W}$-complete.

\subsection{Convergent and Cauchy sequences in $(X,d_{X,M},Z_{\EE})$}

If the family of null sequences is chosen to be $Z_{\EE}(M)$, then some notions introduced in previous subsections can be written in a simpler way. The following lemma is obvious. 
\begin{lemma}\label{l::econvergence}
If $Z(M) = Z_{\EE}(M)$, then for a sequence $\{ x_n\}\subset X$ and $x\in X$ one has that $x_n\to x$ {as} $n\to\infty$ if and only if for any $\varepsilon\in \EE$ there exists $N\in\NN$ such that for $n\geq N$ one has $d_{X,M}(x_n,x)< \varepsilon$.
\end{lemma}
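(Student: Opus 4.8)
The plan is to prove Lemma~\ref{l::econvergence} by unwinding the two definitions involved and showing they say the same thing. The statement asserts that, when $Z(M) = Z_{\EE}(M)$, convergence $x_n \to x$ is equivalent to the $\varepsilon$-$N$ condition: for every $\varepsilon \in \EE$ there is an $N$ with $d_{X,M}(x_n,x) < \varepsilon$ for all $n \geq N$. By the definition of convergence in an $EDM$-distance space, $x_n \to x$ means precisely that the sequence $\{d_{X,M}(x_n,x)\}$ lies in $Z(M) = Z_{\EE}(M)$. So the entire content reduces to applying the definition of $Z_{\EE}(M)$ from Definition~\ref{def::solidM+} to the particular sequence $\{d_{X,M}(x_n,x)\}$.

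First I would set $\alpha_n := d_{X,M}(x_n,x)$ and note that $\{\alpha_n\} \subset M_+$, since the distance function takes values in $M_+$ by definition. Then convergence $x_n \to x$ is, by definition, the assertion $\{\alpha_n\} \in Z_{\EE}(M)$. Now I would simply quote the defining property of membership in $Z_{\EE}(M)$: a sequence $\{\alpha_n\} \subset M_+$ belongs to $Z_{\EE}(M)$ if and only if for every $\varepsilon \in \EE$ there exists $N \in \NN$ such that $\alpha_n < \varepsilon$ for all $n \geq N$. Substituting back $\alpha_n = d_{X,M}(x_n,x)$ gives exactly the right-hand condition in the lemma, so the two sides are definitionally identical and the equivalence follows immediately in both directions.

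There is essentially no obstacle here — as the paper itself flags by calling the lemma ``obvious.'' The only thing to verify is the standing hypothesis that $\{d_{X,M}(x_n,x)\}$ is a sequence in $M_+$ so that Definition~\ref{def::solidM+} applies; this holds because $d_{X,M}\colon X \times X \to M_+$. Thus the proof is a one-line chain of definitional rewritings, and I would present it simply as: by definition $x_n \to x$ iff $\{d_{X,M}(x_n,x)\} \in Z_{\EE}(M)$, and by the definition of $Z_{\EE}(M)$ the latter holds iff for each $\varepsilon \in \EE$ there is $N$ with $d_{X,M}(x_n,x) < \varepsilon$ for all $n \geq N$, which is the claim.
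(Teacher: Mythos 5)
Your proposal is correct and matches the paper's intent: the paper states this lemma without proof as ``obvious,'' and the argument is exactly the definitional unwinding you give, since $x_n\to x$ means $\{d_{X,M}(x_n,x)\}\in Z_{\EE}(M)$ and membership in $Z_{\EE}(M)$ is defined by precisely the stated $\varepsilon$--$N$ condition.
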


\begin{lemma}\label{l::eCauchySeries}
If $Z(M) = Z_{\EE}(M)$, then a series $\sum\nolimits\nolimits_{n=1}^\infty x_n$, $\{x_n\}\subset M_+$, is a Cauchy series if and only if for all $\varepsilon\in \EE$ there exists $N\in\NN$ such that for all $m\geq n\geq N$ one has
\begin{equation}\label{eFundamelity}
    \sum\nolimits\nolimits_{k=n}^m x_k < \varepsilon.
\end{equation}
\end{lemma}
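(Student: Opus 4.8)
The plan is to prove the two implications separately, in parallel with the characterisation of $Z_{\EE}(M)$-convergence in Lemma~\ref{l::econvergence}. For sufficiency, assume~\eqref{eFundamelity} holds uniformly, i.e.\ for every $\varepsilon\in\EE$ there is an $N$ with $\sum_{k=n}^m x_k<\varepsilon$ whenever $m\ge n\ge N$. To verify that $\sum_{n=1}^\infty x_n$ is a Cauchy series in the sense of Definition~\ref{def::CauchySeries} with $Z(M)=Z_{\EE}(M)$, fix arbitrary increasing sequences $\{n_k\},\{m_k\}\subset\NN$ with $m_k\ge n_k$ and fix $\varepsilon\in\EE$. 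Since $\{n_k\}$ is increasing we have $n_k\to\infty$, so $n_k\ge N$ for all $k$ beyond some $K$; for such $k$ the hypothesis applied to $m_k\ge n_k\ge N$ yields $\sum_{s=n_k}^{m_k}x_s<\varepsilon$. As $\varepsilon$ was arbitrary, $\{\sum_{s=n_k}^{m_k}x_s\}\in Z_{\EE}(M)$, which is exactly what the Cauchy-series condition requires.

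For necessity I would argue by contraposition. Suppose~\eqref{eFundamelity} fails, so there is a fixed $\varepsilon_0\in\EE$ such that for every $N\in\NN$ one can choose $m\ge n\ge N$ with $\sum_{k=n}^m x_k\not<\varepsilon_0$. I then construct the offending index sequences recursively: put $m_0=0$ and, having chosen $m_{k-1}$, apply this with $N=m_{k-1}+1$ to obtain $n_k,m_k$ satisfying $m_k\ge n_k\ge m_{k-1}+1$ and $\sum_{s=n_k}^{m_k}x_s\not<\varepsilon_0$. By construction $\{n_k\}$ and $\{m_k\}$ are strictly increasing and obey $m_k\ge n_k$, yet for the value $\varepsilon_0$ no tail of $\{\sum_{s=n_k}^{m_k}x_s\}$ lies below $\varepsilon_0$; hence this sequence is not in $Z_{\EE}(M)$, and therefore $\sum_{n=1}^\infty x_n$ is not a Cauchy series.

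I expect the only delicate point to be the logical bookkeeping in the necessity direction. The negation of the uniform condition~\eqref{eFundamelity} has the form ``$\exists\,\varepsilon_0\ \forall N\ \exists\,m\ge n\ge N$'', and the step that makes the argument work is feeding the growing lower bound $m_{k-1}+1$ back into the ``$\exists$'' so that the extracted pairs are genuinely increasing (this is what forces $n_k\to\infty$, used implicitly in the sufficiency half as well). Throughout I use $<$ only as a strict relation and invoke nothing beyond the partial order of $M$ together with the definition of $Z_{\EE}(M)$ tested against the single element $\varepsilon_0$, so no further structural assumptions on $M$ enter.
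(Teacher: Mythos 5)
Your proposal is correct and follows essentially the same route as the paper: the sufficiency half uses that $n_k\to\infty$ forces the tail of $\bigl\{\sum_{s=n_k}^{m_k}x_s\bigr\}$ below any $\varepsilon\in\EE$, and the necessity half extracts strictly increasing offending index sequences from the negation of~\eqref{eFundamelity} (the paper phrases this as a contradiction rather than a contraposition, but the construction is identical). The extra care you take in forcing $n_k\ge m_{k-1}+1$ just makes explicit what the paper leaves implicit.
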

\begin{proof}
Assume that $\sum\nolimits\nolimits_{n=1}^\infty x_n$ is a Cauchy series but there exists $\varepsilon\in\EE$ such that for each $N\in\NN$ there exist $m\geq n\geq N$ such that   inequality~\eqref{eFundamelity} does not hold. Then one can build increasing sequences of natural numbers $\{m_k\}$ and $\{n_k\}$, $n_k\leq m_k$ for all $k\in \NN$ such that the inequality  $ \sum\nolimits\nolimits_{s=n_k}^{m_k} x_s < \varepsilon$ does not hold for all $k\in\NN$. However, this contradicts to the assumption 
\begin{equation}\label{generalFundamelity}
 \left\{\sum\nolimits\nolimits_{s=n_k}^{m_k} x_s\right\}\in Z_{\EE}(M).   
\end{equation}

If inequality~\eqref{eFundamelity} holds, then for arbitrary $\varepsilon\in \EE$ and arbitrary increasing sequences of natural numbers $\{m_k\}$, $\{n_k\}$ one has $\sum\nolimits\nolimits_{s=n_k}^{m_k} x_s<\varepsilon$ whenever $k$ is such that  $n_k>N$, i.e.~\eqref{generalFundamelity} holds.
 \end{proof}
The proof of the following lemma is similar to the proof of Lemma~\ref{l::eCauchySeries}.
 \begin{lemma}\label{l::eCauchySequence}
 A sequence $\{x_n\}$ in an $EDM$-distance space $(X, M, Z_{\EE}(M))$ is a Cauchy sequence if and only if for all $\varepsilon\in\EE$ there exists $N\in\NN$ such that $d_{X,M}(x_n,x_m)<\varepsilon$ for all $m\geq n\geq N$.
 \end{lemma}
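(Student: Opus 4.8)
The plan is to mirror the two-part argument of Lemma~\ref{l::eCauchySeries}, since the Cauchy-sequence condition has exactly the same logical shape as the Cauchy-series condition, with the partial sums $\sum_{s=n_k}^{m_k}x_s$ replaced by the distances $d_{X,M}(x_{n_k},x_{m_k})$, and with the defining property of $Z_{\EE}(M)$ again being the $\varepsilon$-$N$ description of eventual smallness. Before the two implications I would record the elementary fact that $\theta_M<\varepsilon$ for every $\varepsilon\in\EE$, which holds because $\EE\subset M_+\setminus\{\theta_M\}$. This guarantees that any failure of the inequality $d_{X,M}(x_n,x_m)<\varepsilon$ must occur for a pair with $m>n$ (for $m=n$ the distance is $\theta_M<\varepsilon$), matching the requirement $m_k>n_k$ in the definition of a Cauchy sequence.

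For the forward implication I would argue by contraposition, exactly as in Lemma~\ref{l::eCauchySeries}. Suppose $\{x_n\}$ is Cauchy but the $\varepsilon$-$N$ condition fails: there is some $\varepsilon\in\EE$ such that for every $N\in\NN$ there are indices $m\ge n\ge N$ with $d_{X,M}(x_n,x_m)\not<\varepsilon$. Choosing $N$ successively, and using the preliminary remark to force $m>n$, I would extract increasing sequences $\{n_k\}$ and $\{m_k\}$ with $m_k>n_k$ for which $d_{X,M}(x_{n_k},x_{m_k})\not<\varepsilon$ for all $k$. By the definition of $Z_{\EE}(M)$ this prevents $\{d_{X,M}(x_{n_k},x_{m_k})\}$ from belonging to $Z_{\EE}(M)$, contradicting that $\{x_n\}$ is a Cauchy sequence. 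Conversely, taking the $\varepsilon$-$N$ condition as given, I would verify membership in $Z_{\EE}(M)$ directly: fix any increasing $\{n_k\}$, $\{m_k\}$ with $m_k>n_k$ and any $\varepsilon\in\EE$, and pick the associated $N$. Since $\{n_k\}$ is strictly increasing it eventually exceeds $N$, and then $n_k\ge N$ and $m_k>n_k\ge N$, so the hypothesis gives $d_{X,M}(x_{n_k},x_{m_k})<\varepsilon$ for all large $k$. This is precisely the defining property of $Z_{\EE}(M)$, so every such pair yields a sequence in $Z_{\EE}(M)$, i.e. $\{x_n\}$ is Cauchy.

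The argument is routine, so I do not expect a genuine obstacle; the only point needing care — and the one place where the proof of Lemma~\ref{l::eCauchySeries} is genuinely reused rather than merely imitated — is the subsequence extraction in the contrapositive, where one must arrange the chosen indices into strictly increasing sequences with $m_k>n_k$ so as to fit the quantifier structure in the definition of a Cauchy sequence. The preliminary observation $\theta_M<\varepsilon$ is what makes this bookkeeping work cleanly.
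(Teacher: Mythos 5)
Your overall strategy is exactly the one the paper intends: the paper omits the proof, stating only that it is ``similar to the proof of Lemma~\ref{l::eCauchySeries}'', and your two-part argument (contraposition plus extraction of increasing index sequences for the forward direction, direct verification of membership in $Z_{\EE}(M)$ for the converse) is that proof. The converse direction as you write it is correct.

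However, your preliminary observation contains a genuine error that matters for the forward direction. You claim that for $m=n$ one has $d_{X,M}(x_n,x_m)=\theta_M<\varepsilon$, so that any failure of $d_{X,M}(x_n,x_m)<\varepsilon$ must occur with $m>n$. This uses the implication $x=y\implies d_{X,M}(x,y)=\theta_M$, which is precisely the axiom a \emph{dislocated} $M$-distance space does not have: a $DM$-distance only satisfies $d_{X,M}(x,y)=\theta_M\implies x=y$, and $d_{X,M}(x,x)$ may be large. Since the definition of a Cauchy sequence only constrains pairs with $m_k>n_k$, it says nothing about the self-distances $d_{X,M}(x_n,x_n)$, and the implication ``Cauchy $\implies$ the $\varepsilon$--$N$ condition for all $m\geq n\geq N$'' can actually fail in a dislocated space: take $X=\NN$, $M=\RR_+$ with the usual null sequences, $d_{X,\RR_+}(n,n)=1$ and $d_{X,\RR_+}(n,m)=1/\min(n,m)$ for $n\neq m$; the sequence $x_n=n$ is Cauchy in the sense of the definition, yet $d_{X,\RR_+}(x_n,x_n)=1$ for all $n$. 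So either the conclusion must be read with $m>n\geq N$ (which is all the paper ever uses, e.g.\ when invoking this lemma in the proof of Theorem~\ref{th::M-KContraction}), or one must assume the space is a genuine, non-dislocated, $M$-distance space. With that correction the rest of your bookkeeping --- choosing the next threshold larger than the previously selected $m_k$ so that both index sequences are strictly increasing with $m_k>n_k$ --- goes through and matches the argument of Lemma~\ref{l::eCauchySeries}.
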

In the case $M= \RR$, related to the following lemma results for the Fr\'{e}chet-Wilson and weak Fr\'{e}chet-Wilson properties can be found in~\cite{Wilson}. We do not use this result, so omit its proof, which can be found in the preprint~\cite{fmDistSpaces}.
 \begin{lemma}
 For an $EDM$-distance space $(X, M, Z_{\EE}(M))$ to satisfy the strong Fr\'{e}chet-Wilson property it is sufficient, and  if there exists a null sequence $\{\varepsilon_n\}\subset\EE$,
 then it is necessary that for any $\varepsilon\in \mathcal{E}$ there exists $\delta\in\mathcal{E}$ such that for all $n\in\NN$ and $x_1,\ldots, x_n\in X$ 
$$
\sum\nolimits\nolimits_{k=1}^{n-1} d_{X,M}(x_k,x_{k+1})< \delta \implies d_{X,M}(x_1,x_n)< \varepsilon.
$$
 \end{lemma}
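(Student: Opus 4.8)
My plan is to treat the two implications separately, using throughout the description of $Z_{\EE}(M)$ from Definition~\ref{def::solidM+}: a sequence of elements of $M_+$ lies in $Z_{\EE}(M)$ precisely when, for every $\varepsilon\in\EE$, it is eventually strictly below $\varepsilon$.

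\emph{Sufficiency.} Here no extra hypothesis is needed. Assume the displayed condition and verify the strong Fr\'echet--Wilson property directly. Take a sequence $\{x_n\}\subset X$ together with increasing $\{n_k\},\{m_k\}$, $m_k>n_k$, for which $\left\{\sum\nolimits_{s=n_k}^{m_k-1} d_{X,M}(x_s,x_{s+1})\right\}\in Z_{\EE}(M)$. Fix an arbitrary $\varepsilon\in\EE$ and let $\delta\in\EE$ be the element supplied by the condition. Since the block sums form a null sequence, there is $N$ such that $\sum\nolimits_{s=n_k}^{m_k-1} d_{X,M}(x_s,x_{s+1})<\delta$ for all $k\ge N$. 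I would then apply the condition to the finite chain $x_{n_k},x_{n_k+1},\ldots,x_{m_k}$, whose consecutive distances sum to exactly this block sum, obtaining $d_{X,M}(x_{n_k},x_{m_k})<\varepsilon$ for every $k\ge N$. As $\varepsilon\in\EE$ was arbitrary, $\left\{d_{X,M}(x_{n_k},x_{m_k})\right\}\in Z_{\EE}(M)$, which is the desired conclusion.

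\emph{Necessity.} Assuming a null sequence $\{\varepsilon_n\}\subset\EE$ exists, I would argue by contraposition. Suppose the displayed condition fails: there is a fixed $\varepsilon\in\EE$ such that for \emph{every} $\delta\in\EE$ one finds a finite chain with consecutive-distance sum below $\delta$ yet endpoints at distance not below $\varepsilon$. Choosing $\delta=\varepsilon_j$ for each $j$ yields points $x_1^{(j)},\ldots,x_{p_j}^{(j)}$, which we may take with $p_j\ge 2$, satisfying
$$
\sum\nolimits_{k=1}^{p_j-1} d_{X,M}(x_k^{(j)},x_{k+1}^{(j)})<\varepsilon_j,\qquad d_{X,M}(x_1^{(j)},x_{p_j}^{(j)})\not<\varepsilon .
$$
The core construction is to concatenate these chains into a single sequence $\{x_s\}$, setting $n_j,m_j$ to be the first and last index occupied by the $j$-th block, so that $m_j>n_j$ and the block sum $\sum\nolimits_{s=n_j}^{m_j-1} d_{X,M}(x_s,x_{s+1})$ equals the internal sum above. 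Since $\theta_M\le\text{(block sum)}\le\varepsilon_j$ and $\{\varepsilon_j\}\in Z_{\EE}(M)$, the third defining property of a family of null sequences forces the block sums into $Z_{\EE}(M)$. On the other hand $d_{X,M}(x_{n_j},x_{m_j})=d_{X,M}(x_1^{(j)},x_{p_j}^{(j)})\not<\varepsilon$ for all $j$, so by the $Z_{\EE}(M)$ characterization this sequence of endpoint distances is not a null sequence. Thus the strong Fr\'echet--Wilson property fails, completing the contrapositive.

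The routine half is sufficiency; the real work is the necessity direction, where the index bookkeeping of the concatenation must be set up so that each block sum picks up only the \emph{internal} distances of its chain (the crossover distance between consecutive blocks is deliberately excluded from every block window). The one genuinely delicate point is the requirement $p_j\ge 2$: in a dislocated space a length-one chain would only certify a nonzero self-distance $d_{X,M}(x,x)$, and such a self-distance does not on its own produce a violation of the strong Fr\'echet--Wilson property, since a constant sequence cannot generate null block sums. I therefore expect the main obstacle to be ensuring (or reducing to) counter-chains of length at least two; the hypothesis that a null sequence $\{\varepsilon_n\}\subset\EE$ exists is exactly what is used to drive the block sums down to $\theta_M$ and is not otherwise needed.
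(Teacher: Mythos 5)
The paper does not actually prove this lemma: it states that the proof is omitted and can be found in the preprint~\cite{fmDistSpaces}, so there is no in-text argument to compare yours against. Judged on its own terms, your proof is correct and is surely the intended one. Sufficiency is exactly the direct application of the condition to each finite chain $x_{n_k},\ldots,x_{m_k}$, and necessity is the contrapositive concatenation argument, in which the existence of a null sequence $\{\varepsilon_j\}\subset\EE$ is used, precisely as you say, only to force the block sums into $Z_{\EE}(M)$ (via property~(3) of a family of null sequences, after noting that a block sum $<\varepsilon_j$ is in particular $\leq\varepsilon_j$). The one point you flag but do not close --- that the violating chains can be taken with $p_j\geq 2$ --- is indeed the only soft spot, and it is an artifact of how one reads the quantifier ``for all $n\in\NN$''. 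If $n=1$ is admitted with the empty sum equal to $\theta_M$, the necessity assertion is actually false in a dislocated space: take $M=\RR_+$, $\EE=(0,1)$, $X=\{a\}$ with $d_{X,M}(a,a)=1$; the strong Fr\'echet--Wilson property holds vacuously (no block-sum sequence is null), yet the $n=1$ instance of the condition fails for $\varepsilon=1/2$. So the condition must be read as ranging over chains with at least one link ($n\geq 2$), and under that reading every violating chain automatically has $p_j\geq 2$ and your concatenation goes through without further work. I would therefore accept the proof, replacing the unproved clause ``which we may take with $p_j\ge 2$'' by the observation that the quantifier only ranges over chains of length at least two.
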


\subsection{Examples of Hausdorff distance spaces}\label{s::examples}

\begin{enumerate}[wide]
\item\label{ex::strongFWIsNotEquivalent} {\bf $M$-distance spaces that satisfy the strong Fr\'{e}chet-Wilson property but are not metric spaces.}
The triple $(\mathbb{R}, d_{\mathbb{R},\mathbb{R}}, Z_{\EE}(\RR_+))$, where
$$
    d_{\RR,\RR}(x,y) =
    \begin{cases}
     |x-y|,& |x-y|\leq 1,\\
     (x-y)^2, & |x-y|>1
    \end{cases}
$$
{and $Z_{\EE}(\RR_+)$ determines the usual convergence (see Remark~\ref{r::realNullSeqs})}
is an example of an equipped $M$-distance space that satisfies the strong Fr\'{e}chet-Wilson property and is not a metric space.

The triple $(\mathbb{R}, (x,y)\mapsto (x-y)^2, Z_{\EE}(\RR_+))$, 
is an example of an equipped $M$-distance space that satisfies the  Fr\'{e}chet-Wilson property, but does not satisfy the strong Fr\'{e}chet-Wilson property.

 Let $C(T ,X)$ be the space of continuous functions defined on a metric compact $T$ with values in an $L$-space $X$ with a metric $h_X(x, y)$.
Definitions and necessary facts from the theory of $L$-spaces, in particular the definition of the Lebesgue integral for an $L$-spaces valued functions that will be needed in Section~\ref{s::fredholmEquation},  can be found in~\cite{Vahrameev, Babenko20}.

Set  
$d(x,y)=d_{\RR,\RR}(h_X(x(\cdot), y(\cdot)), 0)$. We obtain a  $C(T,\mathbb{R})$-valued distance in $C(T ,X)$. With such distance function {and a family $Z(C(T,X))$ of point-wise convergent to zero sequences of continuous functions, the triple $(C(T ,X), d, Z(C(T,X)))$} becomes an equipped $M$-distance space that satisfies the strong Fr\'{e}chet-Wilson property.
    
 \item\label{ex::CartesianProducts}   {\bf Cartesian product of a finite number of $EDM$-distance spaces.} 
 Assume $(X,d_{X,M}, Z(M))$ is an $EDM$-distance space that satisfies one of the  Fr\'{e}chet-Wilson properties from Definition~\ref{def::FWProps}. 
 On $X^m$, $m\in\NN$, one can define different  $M$-distances to make $X^m$ an $EDM$-distance space that satisfies the same Fr\'{e}chet-Wilson property. For example, one can use the same family of null sequences $Z(M)$ and consider the distance function
$$
d_{X^m,M}^\Sigma((x_1,\ldots,x_m),(y_1,\ldots,y_m))=\sum\nolimits\nolimits_{k=1}^md_{X,M}(x_k,y_k),
$$
or, if $M$ satisfies the Riesz property,
$$
d_{X^m,M}^\vee((x_1,\ldots,x_m),(y_1,\ldots,y_m))=\bigvee_{k=1}^md_{X,M}(x_k,y_k).
$$
    
Another distance function can be defined as
$$
    d^m_{X^m,M^m}((x_1,\ldots,x_m),(y_1,\ldots,y_m))
    =
    (d_{X,M}(x_1,y_1),\ldots,d_{X,M}(x_m,y_m)).
$$
Here $M^m$ is a monoid with coordinate-wise addition and order,  and $$Z(M^m) = \left\{\{(x_1^n,x_2^n,\ldots, x_m^n)\}\colon \{x_1^n\},\ldots, \{x_m^n\}\in Z(M)\right\}.$$
    
\item {\bf Cartesian product of  an arbitrary family of $M$-pseudo-distance spaces.}\label{ex::infiniteProduct} Assume that an arbitrary set $A$ of parameters, and a family $\{ (X_\alpha, d_{X_\alpha,M_\alpha}, Z(M_\alpha))\}_{\alpha\in A}$ of equipped $M_\alpha$-pseudo-distance spaces are given. Let $M =  \prod_{\alpha\in A} M_\alpha$. In the product space $X = \prod_{\alpha\in A} X_\alpha$ one can define an $M$-valued pseudo-distance function  (it is clear that $M$ is a partially ordered monoid with coordinate-wise addition and partial order, and the zero element is the identity zero function of the variable $\alpha$) $d_{X,M}$ as follows: for $x,y\in X$, and $\alpha\in A$, 
$$
d_{X,M}(x,y)(\alpha) := d_{X_\alpha,M_\alpha}(x_\alpha, y_\alpha).
$$
The family 
$$Z(M) = \{\{x^n\}\subset M\colon \{x^n_\alpha\}\in Z(M_\alpha)\text{ for all } \alpha\in A\}$$ defines coordinate-wise convergence in $X$.  It is clear that the triple $(X,d_{X,M},Z(M))$ becomes an equipped  $M$-pseudo-distance space. 

 If each of the spaces $ (X_\alpha, d_{X_\alpha,M_\alpha},Z(M_\alpha))$ is $M_\alpha$-distance space ($M_\alpha$-metric space, Hausdorff distance space), then their Cartesian product $(X,d_{X,M},Z(M))$ is an $M$-distance space (resp. $M$-metric space, Hausdorff distance space).

\item {\bf Generalized gauge spaces.}\label{ex::gaugeSpaces}
 Let a family $\{(Y,d_{Y,M_\alpha}, Z(M_\alpha))\}_{\alpha\in A}$ of equipped  $M_\alpha$-pseudo-distance spaces be given. As in the previous example, we set $M=\prod_{\alpha\in\mathcal{A}}M_\alpha$;  let also $X_\alpha =Y$ for each $\alpha\in A$. The obtained Cartesian product $(Y^A, d_{Y^A,M},Z(M))$ becomes an equipped $M$-pseudo-distance space. Identifying the elements of the space  $Y$ with the constant mappings from $A$ to $Y$ and setting for  $x,y\in Y$
 $$
d_{Y,M}(x,y)(\alpha) := d_{Y,M_\alpha}(x, y),
 $$
 we obtain an equipped $M$-pseudo-distance space $(Y, d_{Y,M},Z(M))$. If the family of $M$-pseudo-metrics $d_{Y,M_\alpha}$ is enough rich in the sense that   
 \begin{equation}\label{familyOfPseudoMetricsIsRich}
   \text {for all } x,y\in Y,  x\neq y,\text{ there exists } \alpha\in A\text{ such that } d_{Y,M_\alpha}(x,y)\neq \theta_M,
  \end{equation}
 then the obtained space is in fact an equipped $M$-distance space. We call this space a generalized  gauge space.
 
 {In general, the obtained space might be non-Hausdorff, but if all $d_{Y,M_\alpha}$ are $M$-pseudo-metrics and~\eqref{familyOfPseudoMetricsIsRich} holds, then the obtained space is $M$-metric and hence a Hausdorff space.

In the case, when $M_\alpha =\RR_+$ for all $\alpha\in A$, we obtain the classical definition of a gauge space. The family of sets
$
V_{\alpha,r} = \{(x,y)\in Y\times Y\colon d_{Y,M_\alpha}(x,y)<r\}, \alpha\in A, r>0, $
is a subbase of entourages of the uniformity generated by the family $\{d_{Y,M_\alpha}\}$, see e.g.~\cite[Chapter~6]{Kelley}.  The convergence in  $Y$ in the topology of the uniformity is the same as the one in the equipped $M$-pseudo-distance space $(Y, d_{Y,M},Z(M))$. 
 }
    
\item {\bf Uniform spaces as $M$-metric spaces.}\label{ex::uniformSpace}
{
The previous example shows that one can determine the convergence in a uniform space generated by a family of pseudo-metrics using an equipped $M$-pseudo-distance space. Observe also that there is a natural linear structure in the range of the distance function of the corresponding $M$-pseudo-distance space. However, if the uniform structure is defined using a uniformity rather than a gauge, then it might be desirable to determine the convergence in terms of the given entourages of the uniformity rather than via implicit information about the gauges. In this example we construct an $M$-metric space with the same convergence in terms of entourages of the uniformity with separating axiom; if the Hausdorffness of the corresponding to the uniformity topology is not assumed, then the same construction will result in an $M$-pseudo-metric space.
}

We use the terminology from~\cite[Chapter~6]{Kelley}, see also~\cite[Chapter~8]{Engelking}. 
  Let $X$ be a set and $\Phi$ be a separating uniform structure on it.
  Denote by $M$ the set of all 
  subsets of $X\times X$ that contain the set $\Delta(X):=\{(x,x)\colon x\in X\}$. It becomes a partially ordered monoid, if we define the order by inclusion, set  $\theta_M=\Delta(X)$, and $A + B = A\circ B$, where as usually
$$
       A\circ B:=\{(x,y)\in X\times X\colon \text{there exists } z\in X \text{ such that } 
       (x,z)\in A \text{ and } (z,y)\in B\}.
$$
    Obviously, $M_+ = M$.
    Let $\EE$ be a symmetric base of entourages of the uniformity $\Phi$ and set 
  $$
    d_{X,M}(x,y) = \bigcap\left\{\varepsilon\in \EE\colon (x,y)\in \varepsilon\right\}.
  $$
  The family $\EE$ satisfies all properties of Definition~\ref{def::solidM+}, and hence we can consider the family $Z_{\EE}(M)$ of null sequences.  Next we show that $(X, d_{X,M},  Z_{\EE}(M))$ is an equipped $M$-metric space.
  
 If $d_{X,M}(x,y) = \Delta(X)$, then $(x,y)\in \Delta(X)$ and hence $x = y$.  Moreover, if $x = y$, then $(x,y) \in \varepsilon$ for all $\varepsilon\in \EE$ and hence $d_{X,M}(x,y) = \Delta(X)$ due to the separating axiom.   From the definition of the function $d_{X,M}$ it follows that $d_{X,M}(x,y) = d_{X,M}(y,x)$, due to symmetricity of entourages from $\EE$.   Finally, let $x,y,z\in X$. Then $(x,z)\in d_{X,M}(x,y)\circ d_{X,M}(y,z)$, and hence $d_{X,M}(x,z)\leq d_{X,M}(x,y) + d_{X,M}(y,z).$
  
 It is easy to see that  convergence in the space $(X,d_{X,M}, Z_{\EE}(M))$ coincides with the convergence in the topology induced by the uniform structure.
\end{enumerate}

 \section{ Contraction mapping theorems}\label{s::contractionTheorems}

\subsection{A Meir-Keeler-Taylor type  theorem}
In this section we show that consideration of spaces with monoid-valued distance can allow to unify the arguments for proofs of fixed point theorems in various seemingly different settings.

Below is  a result for Meir-Keeler~\cite{Meir69} type contractive mappings {in $EDM_\zeta$-metric spaces (see Section~\ref{s::zetaMetricSpace})}. In view of Example~\ref{ex::uniformSpace} of Section~\ref{s::examples} it also contains a  result by Taylor~\cite{Taylor} for contractive mappings in uniform spaces.

\begin{theorem}\label{th::M-KContraction}
Let $(X,d_{X,M},Z_{\EE}(M))$ be an $\mathcal{H}$-complete $EDM_\zeta$-metric space with coordinate-wise non-decreasing $\zeta$ that satisfies property~\eqref{zetaIsNullSq} and 
$
\zeta(\varepsilon,\delta)\geq \varepsilon,\, \zeta(\varepsilon,\delta)\geq \delta \text{ for all } \varepsilon, \delta\in \EE.
$
Assume $f\colon X\to X$ satisfies the following property: for each $\varepsilon\in \EE$ there exists $\delta\in \EE$ such that for all $x,y\in X$ 
\begin{equation}\label{taylor2}          d_{X,M}(x,y)\leq \zeta( \delta, \varepsilon) \implies d_{X,M}(f(x),f(y))<\varepsilon.
\end{equation}
Then $f$ has a fixed point, provided one of the two sets of properties holds:
\begin{enumerate}[label = \Roman*]
\item\label{conditionsOnM}
\begin{enumerate}
    \item\label{taylorUniqueness1} If $x,y\in X$ and $\alpha, \beta\in M_+$ are such that $d_{X,M}(x,y)< \alpha+\beta$, then there exists $z\in X$ such that $d_{X,M}(x,z) < \alpha$ and $d_{X,M}(z,y) < \beta$;
    \item\label{taylorUniqueness2} For arbitrary $\varepsilon\in \EE$ and $x,y\in X$ there exists $n\in\NN$ such that $d_{X,M}(x,y)< n\cdot \varepsilon$;
\end{enumerate}
\item \label{conditionsOnF}
\begin{enumerate}
    \item  There exists $x\in X$ such that 
\begin{equation}\label{taylor1}
  \{  d_{X,M}(f^n(x),f^{n+1}(x))\}\in Z_{\EE}(M).
\end{equation} 
\end{enumerate}
\end{enumerate}
If conditions~\ref{conditionsOnM} hold, then the fixed point is unique.
\end{theorem}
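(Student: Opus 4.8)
The plan is to run the classical Picard--Meir--Keeler scheme, adapted to the partially ordered monoid $M$. Fix a point $x_0$ (under~\ref{conditionsOnF} the one supplied by~\eqref{taylor1}, under~\ref{conditionsOnM} an arbitrary one) and set $x_n=f^n(x_0)$. The first fact I would record is the contractive consequence of~\eqref{taylor2}: since $\zeta(\delta,\varepsilon)\ge\varepsilon$, for every $\varepsilon\in\EE$ one has the absorbing implication $d_{X,M}(x,y)<\varepsilon\Rightarrow d_{X,M}(f(x),f(y))<\varepsilon$, and more precisely $d_{X,M}(x,y)\le\zeta(\delta,\varepsilon)\Rightarrow d_{X,M}(f(x),f(y))<\varepsilon$ with $\delta=\delta(\varepsilon)$. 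The whole argument then hinges on the single fact that the displacement sequence $\{d_{X,M}(x_n,x_{n+1})\}$ is a \emph{null sequence}: under~\ref{conditionsOnF} this is exactly hypothesis~\eqref{taylor1}, while under~\ref{conditionsOnM} it must be derived (last paragraph).

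Granting that the displacements are null, I would next show $\{x_n\}\in\mathcal{CH}$. Fix $\varepsilon\in\EE$, choose $\delta$ as in~\eqref{taylor2}, and pick $N$ with $d_{X,M}(x_n,x_{n+1})<\delta$ for all $n\ge N$. For fixed $n\ge N$ I would prove by induction on $m\ge n$ that $d_{X,M}(x_n,x_m)\le\zeta(\delta,\varepsilon)$: the cases $m=n,n+1$ use $\zeta(\delta,\varepsilon)\ge\delta$, and the inductive step applies the generalized triangle inequality through $x_{n+1}$,
\[
d_{X,M}(x_n,x_{m+1})\le\zeta\bigl(d_{X,M}(x_n,x_{n+1}),\,d_{X,M}(f(x_n),f(x_m))\bigr),
\]
where the induction hypothesis $d_{X,M}(x_n,x_m)\le\zeta(\delta,\varepsilon)$ and~\eqref{taylor2} give $d_{X,M}(f(x_n),f(x_m))<\varepsilon$, so coordinate-wise monotonicity of $\zeta$ bounds the right-hand side by $\zeta(\delta,\varepsilon)$. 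One further application of~\eqref{taylor2} (with base index $n-1$) then yields $d_{X,M}(x_n,x_m)<\varepsilon$ for all $m\ge n\ge N+1$, which by Lemma~\ref{l::eCauchySequence} means $\{x_n\}\in\mathcal{CH}$; by $\mathcal{H}$-completeness there is $x^\ast$ with $x_n\to x^\ast$.

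To see that $x^\ast$ is a fixed point I would check that $f(x_n)\to f(x^\ast)$: given $\varepsilon$ with its $\delta$, eventually $d_{X,M}(x_n,x^\ast)<\delta\le\zeta(\delta,\varepsilon)$, hence $d_{X,M}(f(x_n),f(x^\ast))<\varepsilon$, and since $\varepsilon$ is arbitrary $\{d_{X,M}(f(x_n),f(x^\ast))\}\in Z_{\EE}(M)$. Thus $x_{n+1}=f(x_n)\to f(x^\ast)$ while also $x_{n+1}\to x^\ast$. Since an $EDM_\zeta$-metric space is an $ED\mathcal{M}_{(\phi,\zeta)}$-metric space with $\phi$ the identity (so~\eqref{phiIsNull} holds trivially and~\eqref{zetaIsNullSq} by hypothesis), it is Hausdorff, limits are unique, and $f(x^\ast)=x^\ast$.

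The hard part is the two remaining items under~\ref{conditionsOnM}: deriving that the displacements are null, and uniqueness. For the displacement I would fix $\varepsilon\in\EE$ with its $\delta$ and, using absorption, reduce to producing a single index $n$ with $d_{X,M}(x_n,x_{n+1})<\varepsilon$ (after which the bound persists for all larger indices, which is exactly membership in $Z_{\EE}(M)$); here I expect to use Archimedeanness~\ref{taylorUniqueness2} to dominate a distance by a multiple $n\cdot\varepsilon$ and the splitting property~\ref{taylorUniqueness1} to decompose it into a chain of steps each $<\varepsilon$ on which~\eqref{taylor2} can act. For uniqueness I would show that two fixed points satisfy $d_{X,M}(x^\ast,y^\ast)=d_{X,M}(f^n(x^\ast),f^n(y^\ast))<\varepsilon$ for every $\varepsilon\in\EE$, whence $d_{X,M}(x^\ast,y^\ast)=\theta_M$ by property~(1) of Definition~\ref{def::solidM+} and so $x^\ast=y^\ast$. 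The genuine obstacle in both items is that, in a merely partially ordered monoid, the generalized triangle inequality produces only nested applications of $\zeta$ that do not telescope, and the real-line device of passing to the infimum of a monotone displacement sequence is unavailable; I therefore expect the argument to proceed by driving the individual step sizes to $\theta_M$ and collapsing the finite nested $\zeta$-expression through~\eqref{zetaIsNullSq}, with the axioms of Definition~\ref{def::solidM+} (arbitrarily ``small'' $\delta$ with $\delta+\delta\le\varepsilon$) supplying the quantitative control the order alone does not.
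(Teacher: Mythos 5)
Your treatment of the Cauchy step, the passage to the limit, and the identification of the fixed point is correct and essentially coincides with the paper's: the paper also first records the non-expansiveness $d_{X,M}(x,y)<\varepsilon\implies d_{X,M}(f(x),f(y))<\varepsilon$, then runs an induction combining the $\zeta$-triangle inequality with~\eqref{taylor2} to get $d_{X,M}(f^n(x),f^{n+k}(x))<\varepsilon$ for all $k$ once the displacements are eventually below $\delta$, and concludes via $\mathcal{H}$-completeness, continuity of $f$, and Hausdorffness of the $EDM_\zeta$-metric space. Your variant of the induction (bounding $d_{X,M}(x_n,x_m)$ by $\zeta(\delta,\varepsilon)$ first and applying~\eqref{taylor2} once at the end) is a harmless reorganization.

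The genuine gap is in case~\ref{conditionsOnM}, and it propagates to your uniqueness argument. Everything there reduces to the claim that for \emph{arbitrary} $a,b\in X$ one has $\{d_{X,M}(f^n(a),f^n(b))\}\in Z_{\EE}(M)$ (applied with $b=f(a)$ it yields~\eqref{taylor1}, applied to two fixed points it yields uniqueness), and you do not prove this claim: you only name the ingredients (Archimedean property, splitting) and speculate that one should ``collapse a nested $\zeta$-expression through~\eqref{zetaIsNullSq}'' using the $\delta+\delta\le\varepsilon$ axiom of $\EE$. That is not how the argument goes, and it is not clear it can be made to work: decomposing $d_{X,M}(a,b)<n\delta$ into a whole chain of small steps at once leaves you with an $n$-fold nested $\zeta$ that you cannot control uniformly in $n$. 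The actual mechanism is a single induction on $n$ of the implication $d_{X,M}(a,b)<n\cdot\delta\implies d_{X,M}(f^n(a),f^n(b))<\varepsilon$: for the step one splits off exactly one $\delta$ via property~\ref{taylorUniqueness1} to get $c$ with $d_{X,M}(a,c)<\delta$ and $d_{X,M}(c,b)<n\delta$, applies the inductive hypothesis to $(c,b)$ to get $d_{X,M}(f^n(c),f^n(b))<\varepsilon$ and non-expansiveness to $(a,c)$ to keep $d_{X,M}(f^n(a),f^n(c))<\delta$, then recombines with the triangle inequality to get $d_{X,M}(f^n(a),f^n(b))\le\zeta(\delta,\varepsilon)$ and invokes~\eqref{taylor2} one more time to obtain $d_{X,M}(f^{n+1}(a),f^{n+1}(b))<\varepsilon$; non-expansiveness then propagates the bound to all larger indices. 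Neither~\eqref{zetaIsNullSq} nor the $\delta+\delta\le\varepsilon$ property is needed here. Without this lemma your proof establishes the theorem only under conditions~\ref{conditionsOnF}, and establishes neither existence under~\ref{conditionsOnM} nor uniqueness.
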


\begin{proof}
First of all observe that $f$ is non-expansive in the following sense: for arbitrary $\varepsilon\in\EE$,
\begin{equation}\label{fIsNonExpanding}
    d_{X,M}(x,y)< \varepsilon\implies d_{X,M}(f(x),f(y))<\varepsilon.
\end{equation}
Indeed, let $\delta\in \EE$ be chosen according to condition~\eqref{taylor2}. Since  $d_{X,M}(x,y)<\varepsilon \leq \zeta(\delta,\varepsilon  )$, we obtain $d_{X,M}(f(x),f(y))<\varepsilon$ as required. 

Next we show that if conditions~\ref{conditionsOnM} hold, then for arbitrary $a,b\in X$, 
\begin{equation}\label{taylor0}
  \{  d_{X,M}(f^n(a),f^{n}(b))\}\in Z_{\EE}(M).
\end{equation}

Let $\varepsilon\in\EE$ be arbitrary, $\delta\in\EE$ be chosen according to~\eqref{taylor2} and $n$ be chosen according to property~\eqref{taylorUniqueness2}, so that $d_{X,M}(a,b)<n\cdot \delta$. Due to Lemma~\ref{l::econvergence}, inclusion~\eqref{taylor0} is implied by \begin{equation}\label{taylorUniquenessOfFixedPoint}
d_{X,M}(f^m(a), f^m(b))< \varepsilon\text{ for all } m\geq n.
\end{equation}
In order to prove~\eqref{taylorUniquenessOfFixedPoint}, it is sufficient to show that 
\begin{equation}\label{inductionStatement}
d_{X,M}(a,b)<n\cdot \delta\implies d_{X,M}(f^n(a), f^n(b))< \varepsilon
\end{equation}
in virtue of~\eqref{fIsNonExpanding}. We proceed by induction on $n\in\NN$. If $n=1$, then 
$
d_{X,M}(a,b) < \delta \leq \zeta(\delta, \varepsilon),
$
which implies~\eqref{inductionStatement} with $n = 1$, due to~\eqref{taylor2}.
Assume that it holds for some $n\in\NN$ and 
$$
d_{X,M}(a,b)< (n+1)\delta = \delta + n\delta.
$$
In view of property~\eqref{taylorUniqueness1}, there exists $c\in X$ such that $d_{X,M}(a,c)<  \delta$ and $d_{X,M}(c,b) < n\delta$. In virtue of the inductive assumption, $d_{X,M}(f^n(c), f^n(b))< \varepsilon$, and  inequality~\eqref{fIsNonExpanding} implies that  $d_{X,M} (f^n(a),f^n(c))< \delta$. Applying the triangle inequality, $
d_{X,M} (f^n(a),f^n(b))
\leq \zeta( \delta, \varepsilon)
$
and thus by~\eqref{taylor2}, $d_{X,M} (f^{n+1}(a),f^{n+1}(b))< \varepsilon$, as desired.

Applying~\eqref{taylor0} to $a = x$ and $b = f(x)$ with arbitrary $x\in X$, we obtain~\eqref{taylor1}. Hence in both cases when either conditions~\ref{conditionsOnM} or~\ref{conditionsOnF} hold, there is $x\in X$ such that~\eqref{taylor1} holds.

Next we show that  $\{f^n(x)\}$ is a Cauchy sequence.  For a fixed $\varepsilon\in \EE$ choose $\delta\in \EE$ according to condition~\eqref{taylor2}. Due to~\eqref{taylor1} and Lemma~\ref{l::econvergence}, there exists  $n_0$ such that for all $n\ge n_0$ one has
\begin{equation}\label{taylor1Conseq}
d_{X,M}(f^n(x),f^{n+1}(x))<\delta \text{ and } d_{X,M}(f^n(x),f^{n+1}(x))<\varepsilon.  
\end{equation}
Let $n> n_0$ be fixed. We show by induction on $k$ that for all $k\in \NN$  one has
\begin{equation}\label{fundamentalityCondition}
d_{X,M}(f^n(x),f^{n+k}(x))<\varepsilon.
\end{equation}
For $k=1$ inequality~\eqref{fundamentalityCondition} holds due to~\eqref{taylor1Conseq}. Assume that it is true for some $k \geq 1$. Then due to~\eqref{taylor1Conseq}, $d_{X,M}(f^{n-1}(x),f^{n}(x))<\delta$, and $d_{X,M}(f^n(x),f^{n+k}(x))<\varepsilon$ by the inductive assumption. Due to the triangle inequality, $d_{X,M}(f^{n-1}(x),f^{n+k}(x))\leq \zeta(\delta,\varepsilon)$, and hence in virtue of condition~\eqref{taylor2}, 
$
d_{X,M}(f^n(x),f^{n+k+1}(x))<\varepsilon,
$
which completes the induction step. Hence $\{f^n(x)\}$ is a Cauchy sequence due to Lemma~\ref{l::eCauchySequence}.

Completeness of the space implies existence of a point $\overline{x}\in X$ such that 
\begin{equation}\label{orbitLimit}
    f^n(x)\to\overline{x} \text{ as } n\to\infty.
\end{equation}
Property~\eqref{fIsNonExpanding} implies that $f$ is continuous, i.e. if $x_n\to x$, then $f(x_n)\to f(x)$. 
From~\eqref{orbitLimit} and continuity of $f$ we obtain $f(f^n(x))\to f(\overline{x})$. Uniqueness of the limit implies
$f(\overline{x})=\overline{x}$.

Finally, we prove the uniqueness of the fixed point  in the case, when conditions~\ref{conditionsOnM} hold. Let $a,b$ be two fixed points of $f$. Then for all $m\in\NN$,  $d_{X,M}(a,b) = d_{X,M}(f^m(a),f^m(b))$ and hence~\eqref{taylor0} implies that $\{d_{X,M}(a,b)\}\in Z(M)$, thus $a=b$, as desired.
\end{proof}

\subsection{A Caristi type theorem}
In this section we present a Caristi type theorem (see e.g.~\cite{kirk_shahzad} and references therein) for $M$-distance spaces.
\begin{definition}
Let $M$ be a partially ordered monoid and $Z(M)$ be a family of null sequences  in $M$. The pair $(M,Z(M))$ is said to satisfy the Weierstrass property, if for arbitrary sequence $\{\alpha_n\}\subset M$ such that the sequence of partial sums $\left\{\sum\nolimits\nolimits_{k=1}^n \alpha_k\right\}$ is bounded, one has $\{\alpha_n\}\in \mathcal{CS}$.
\end{definition}

Observe that the monoids built in Examples~\ref{ex::infiniteProduct} and~\ref{ex::gaugeSpaces} of Section~\ref{s::examples} satisfy the Weierstrass property provided each component of the product $(M_\alpha, Z(M_\alpha))$ satisfies it. In particular, the Weierstrass property holds in the case $M = \RR_+$ with usual convergence to $0$. On the other hand, the monoid from Example~\ref{ex::uniformSpace} does not generally speaking satisfy the Weierstrass property, since $X\times X\in M_+$ and hence partial sums of any series are bounded.

\begin{definition}
We say that a function $f$ is weakly orbitally continuous, if 
\begin{equation*}
    x\in X, f^n(x)\to a \text { as } n\to\infty \implies f(a) =a.
\end{equation*}
\end{definition}
\begin{definition}
Let $M$ be a partially ordered monoid and $N\subset M_+$. We say that $N$ is bounded, if there exists $m\in M$ such that $n\leq m$ for all $n\in N$.  
\end{definition}
\begin{theorem}\label{th::caristiType}
Let $(X,d_{X,M},Z(M))$ be an $\mathcal{W}$-complete {$EDM$-distance} space such that the pair $(M, Z(M))$ satisfies the Weierstrass property, and $f\colon X\to X$ be an orbitally continuous operator. 
Assume that mappings $\phi \colon X\to M_+$ and $\eta\colon M_+\to M_+$ are such that for all $x\in X$
$$
\eta(d_{X,M}(x,f(x))) + \phi(f(x))\le \phi(x),
$$
$\eta$ is semi-additive (i.e. $\eta(a+b)\leq \eta(a)+\eta(b)$ for all $a,b\in M_+$), and for each  $A\subset M_+$, if $\eta(A)$  is bounded, then $A$  is bounded. Then the operator $f$ has a fixed point. 
\end{theorem}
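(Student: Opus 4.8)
The plan is to run the classical Caristi argument along a single Picard orbit, but to replace the usual ``summable orbit implies Cauchy'' step by the Weierstrass property, using the two hypotheses on $\eta$ (semi-additivity, and the boundedness-reflecting property) to control the genuine distance series rather than its $\eta$-image.

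First I would fix an arbitrary $x_0\in X$, set $x_n=f^n(x_0)$, and abbreviate $\alpha_n=\eta(d_{X,M}(x_n,x_{n+1}))$ and $p_n=\phi(x_n)$. Reading the defining inequality at $x=x_n$ gives $\alpha_n+p_{n+1}\le p_n$. Using only associativity of $+$ and its compatibility with the order (so no commutativity of $M$ is needed), I would show by induction on $N$ that $\sum_{n=0}^{N-1}\alpha_n+p_N\le p_0$; the inductive step regroups $\sum_{n=0}^{N}\alpha_n+p_{N+1}$ as $\big(\sum_{n=0}^{N-1}\alpha_n\big)+(\alpha_N+p_{N+1})$ and applies $\alpha_N+p_{N+1}\le p_N$ on the left. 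Since $\phi$ takes values in $M_+$ we have $\theta_M\le p_N$, so monotonicity yields $\sum_{n=0}^{N-1}\alpha_n\le p_0=\phi(x_0)$. Thus the partial sums of $\sum\alpha_n$ are bounded by $\phi(x_0)$.

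The decisive step is to transfer this bound from the $\eta$-image back to the distance series itself. Writing $\sigma_N=\sum_{n=0}^{N-1}d_{X,M}(x_n,x_{n+1})$, iterated semi-additivity of $\eta$ gives $\eta(\sigma_N)\le\sum_{n=0}^{N-1}\eta(d_{X,M}(x_n,x_{n+1}))=\sum_{n=0}^{N-1}\alpha_n\le\phi(x_0)$. Hence, taking $A=\{\sigma_N\colon N\in\NN\}\subset M_+$, the image $\eta(A)$ is bounded, and the hypothesis that $\eta(A)$ bounded implies $A$ bounded shows that the partial sums $\sigma_N$ are themselves bounded in $M$. This is exactly the premise of the Weierstrass property, which therefore yields $\{d_{X,M}(x_n,x_{n+1})\}\in\mathcal{CS}$, i.e.\ $\{x_n\}\in\mathcal{CW}$.

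Finally, $\mathcal{W}$-completeness produces $a\in X$ with $f^n(x_0)\to a$, and orbital continuity of $f$ (in the weak sense defined above) forces $f(a)=a$, the desired fixed point. The step I expect to be the main obstacle is the third paragraph: one is tempted to apply the Weierstrass property directly to $\{\alpha_n\}$, but that only controls the $\eta$-transformed series, whereas the notion of Cauchy series and $\mathcal{W}$-completeness are phrased in terms of $d_{X,M}$ itself. The resolution is to apply the boundedness-reflecting hypothesis not to the individual terms but to the \emph{set of partial sums} $\{\sigma_N\}$, after bounding their $\eta$-images by semi-additivity; this converts the telescoping estimate into boundedness of the true partial sums and lets the Weierstrass property bridge the gap between the abstract dissipativity inequality and genuine convergence of the orbit.
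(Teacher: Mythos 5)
Your proposal is correct and follows essentially the same route as the paper's proof: both bound $\eta$ of the partial sums of the orbit distances by $\phi(x_0)$ via telescoping and semi-additivity, use the boundedness-reflecting hypothesis on $\eta$ to bound the true partial sums, and then invoke the Weierstrass property, $\mathcal{W}$-completeness, and orbital continuity. The only (immaterial) difference is organizational — the paper runs a single induction directly on $\eta$ of the block sums, whereas you first telescope the $\eta$-images of individual terms and then apply iterated semi-additivity.
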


\begin{proof}
Set $x_n = f^n(x_0)$, $n\in\NN$. We prove by induction on $n\in\NN$ that for arbitrary $k\geq 0$, one has
\begin{equation}\label{inductionClaim}
\eta(d_{X,M}(x_k, x_{k+1}) + \ldots + d_{X,M}(x_{k+n-1}, x_{k+n}))\leq \phi(x_k).
\end{equation}
If $n = 1$ and $k\geq 0$, then
$$
\eta(d_{X,M} (x_k, x_{k+1}))
  \leq 
\eta(d_{X,M}(x_k, x_{k+1})) + \phi(x_{k+1})
\leq 
\phi(x_k).
$$
Assume~\eqref{inductionClaim} holds for some $n\in\NN$ and arbitrary $k\geq 0$. Then
\begin{gather*}
\eta(d_{X,M}(x_k, x_{k+1}) +  (d_{X,M}(x_{k+1}, x_{k+2}) + \ldots + d_{X,M}(x_{k+n}, x_{k+n+ 1})))
\\ \leq 
\eta(d_{X,M}(x_k, x_{k+1})) + \eta (d_{X,M}(x_{k+1}, x_{k+2}) + \ldots + d_{X,M}(x_{k+n}, x_{k+n+ 1}))
 \\\leq 
\eta(d_{X,M}(x_k, x_{k+1})) +  \phi(x_{k+1})
\leq 
\phi(x_k),
\end{gather*}
as desired. From~\eqref{inductionClaim} we obtain that for arbitrary $n\in\NN$, $\eta\left(\sum\nolimits\nolimits_{k=1}^n d_{X,M}(x_{k-1}, x_{k})\right)$ is bounded by $\phi(x_0)$, and hence the partial sums $\sum\nolimits\nolimits_{k=1}^n d_{X,M}(x_{k-1}, x_{k})$, $n\in\NN$, are bounded. Due to the Weierstrass property, this implies that $\{x_n\}\in \mathcal{CW}$, and hence  $\{x_n\}$ converges to some point $x\in X$. Due to orbitally continuity of $f$, $x$ is a fixed point of $f$.
\end{proof}

\subsection{ Sequential contractions of \'Ciri\'c and Caccioppoli types in M-distance spaces}\label{s::sequentialContractions}

\begin{definition}
For a function $f\colon X\to X$ and $x_0\in X$, by $O(f,x_0)$ we denote the orbit
$$
O(f,x_0):=\{x_0,f(x_0), f^2(x_0),\ldots\}.
$$
For $n\in \ZZ_+$ set
$
O_n(f,x_0):=\{f^n(x_0),f^{n+1}(x_0), f^{n+2}(x_0),\ldots\},
$
where $f^0(x_0) = x_0$.
We say that an orbit $O(f,x_0)$ is bounded, if the set $\left\{d_{X,M}(f^n(x_0), f^m(x_0)),n,m\geq 0\right\}$ is bounded.
\end{definition}

For a sequence $\{ \lambda_n\}_{n\in \NN}$  of  operators $\lambda_i\colon M_+\to M_+$, $i\in\NN$, for any $n\in\NN$  we set
$$\
\prod\nolimits_{i=1}^n\lambda_i=\lambda_1\circ \lambda_2\circ\ldots\circ\lambda_n.
$$ 

\begin{theorem}\label{th::contraction1}
Let $(X,d_{X,M}, Z(M))$ be a Hausdorff $EDM$-distance space, $f\colon X\to X$ be weakly orbitally continuous, and  $\{ \lambda_n\}$ be a sequence of non-decreasing operators $\lambda_i\colon M_+\to M_+$, $i\in\NN$. Suppose that $x_0\in X$ is such that  one of the following two sets of conditions holds:
\begin{enumerate}[label = \Roman*]
    \item\label{firstSet}
    \begin{enumerate}
        \item $(X,d_{X,M}, Z(M))$ is $\mathcal{H}$-complete;
        \item  the orbit $O(f,x_0)$ is bounded; 
   \item\label{lambdaIsNullFunction}  for any $\alpha\in M_+$ one has $\left\{ \left(\prod_{i=1}^n\lambda_i\right)(\alpha)\right\}\in Z(M);
    $
    \item for each $n\in\NN$ and $x,y\in  O_n(f, x_0)$ there exist $x',y'\in O_{n-1}(f, x_0)$ such that 
    \begin{equation}\label{graduatedContraction}
         d_{X,M}(x,y)\leq \lambda_n(d_{X,M}(x',y')).
    \end{equation}
    \end{enumerate}
\item\label{secondSet}
    \begin{enumerate}
    \item $(X,d_{X,M}, Z(M))$ is $\mathcal{W}$-complete;
    \item for any $n\in \NN$,
$
    d_{X,M}(f^n(x_0),f^{n+1}(x_0))\leq \lambda_n(d_{X,M}(f^{n-1}(x_0),f^n(x_0));
$
\item\label{seriesConvergence}
$
\left\{\left(\prod_{i=1}^n\lambda_i\right)(d_{X,M}(x_0, f(x_0)))\right\}\in\mathcal{CW}.
$
\end{enumerate}
\end{enumerate}
Then $f$ has a fixed point.
The fixed point is unique provided the following set of conditions holds
\begin{enumerate}[label =\Roman*,resume]
    \item\label{uniquenessConditions} 
    \begin{enumerate}
    \item Condition~\eqref{lambdaIsNullFunction} holds;     
        \item\label{wContractionB}  For arbitrary  $x_0,y_0\in X$  and arbitrary $x\in  O_n(f, x_0)$, $y\in  O_n(f, y_0)$, there exist $x'\in O_{n-1}(f, x_0)$ and $y'\in O_{n-1}(f, y_0)$ such that inequality \eqref{graduatedContraction} holds;
        \item\label{boundednessOfOrbitPairs}  For a rbitrary  $x_0,y_0\in X$, the set $\{d_{X,M}(x,y)\colon x\in  O(f, x_0),y\in  O(f, y_0)\}$ is bounded.
    \end{enumerate}
\end{enumerate}
\end{theorem}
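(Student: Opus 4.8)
The plan is to run the classical Picard scheme in the form adapted to $M$-distance spaces: for the given $x_0$, show that the orbit sequence $\{f^n(x_0)\}$ is Cauchy in exactly the sense matched by the available completeness, extract a limit $\overline{x}$, and then invoke weak orbital continuity, which \emph{by definition} gives $f(\overline{x})=\overline{x}$ the instant $f^n(x_0)\to\overline{x}$. The two existence cases differ only in which Cauchy/completeness notion is used and in how the contraction data is iterated, so the real work is turning the contraction hypotheses into a null-sequence estimate. Under conditions~\ref{firstSet}, I would fix increasing $\{n_k\},\{m_k\}$ with $m_k>n_k$; since both $f^{n_k}(x_0)$ and $f^{m_k}(x_0)$ lie in $O_{n_k}(f,x_0)$, iterating the graduated contraction~\eqref{graduatedContraction} $n_k$ times and using monotonicity of each $\lambda_i$ gives $d_{X,M}(f^{n_k}(x_0),f^{m_k}(x_0))\le\left(\prod_{i=1}^{n_k}\lambda_i\right)(d_{X,M}(u,v))$ for some $u,v\in O(f,x_0)$. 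By boundedness of the orbit we bound $d_{X,M}(u,v)\le m$ for a fixed $m\in M$, so the right-hand side is dominated by $\left(\prod_{i=1}^{n_k}\lambda_i\right)(m)$, which by condition~\eqref{lambdaIsNullFunction} (with $\alpha=m$) and property~(4) of $Z(M)$ is a null sequence; property~(3) then transfers this to $\{d_{X,M}(f^{n_k}(x_0),f^{m_k}(x_0))\}\in Z(M)$. Thus $\{f^n(x_0)\}\in\mathcal{CH}$, and $\mathcal{H}$-completeness together with weak orbital continuity closes this case.

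Under conditions~\ref{secondSet} I would instead iterate the consecutive-step estimate: writing $b_n=d_{X,M}(f^n(x_0),f^{n+1}(x_0))$, monotonicity yields $b_n\le\lambda_n(b_{n-1})\le\cdots\le\left(\prod_{i=1}^n\lambda_i\right)(b_0)$ with $b_0=d_{X,M}(x_0,f(x_0))$. Condition~\eqref{seriesConvergence} provides that this majorizing sequence has Cauchy partial-sum series, i.e. lies in $\mathcal{CS}$; since $\theta_M\le b_n$ termwise and addition respects the order, every partial-segment sum of $\{b_n\}$ is dominated by the corresponding null segment of the majorant, so property~(3) gives $\{b_n\}\in\mathcal{CS}$ and hence $\{f^n(x_0)\}\in\mathcal{CW}$. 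Then $\mathcal{W}$-completeness supplies the limit and weak orbital continuity gives the fixed point.

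For uniqueness under~\ref{uniquenessConditions}, let $a,b$ be fixed points, so that $O(f,a)=\{a\}$ and $O(f,b)=\{b\}$. Applying condition~\ref{wContractionB} at each level to these singleton orbits yields $d_{X,M}(a,b)\le\lambda_n(d_{X,M}(a,b))$, and iterating gives $d_{X,M}(a,b)\le\left(\prod_{i=1}^n\lambda_i\right)(d_{X,M}(a,b))$ for every $n$. Writing $c:=d_{X,M}(a,b)$ (a legitimate argument because of the boundedness in~\ref{boundednessOfOrbitPairs}), condition~\eqref{lambdaIsNullFunction} makes $\left\{\left(\prod_{i=1}^n\lambda_i\right)(c)\right\}$ a null sequence; as $\theta_M\le c$ sits below it, property~(3) forces the constant sequence $\{c,c,\ldots\}$ into $Z(M)$, whence property~(1) gives $c=\theta_M$ and therefore $a=b$.

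The routine but delicate part is the iteration bookkeeping: one must check that peeling the contraction from level $n$ down to the base orbit assembles precisely the composite $\prod_{i=1}^n\lambda_i$ as it is defined in the statement, applying monotonicity in the correct direction at each nesting step. The only genuinely conceptual care needed is the uniform treatment of the two Cauchy notions — domination of a \emph{single} distance by a null sequence in the $\mathcal{CH}$ argument versus domination of an \emph{entire series} in the $\mathcal{CW}$ argument — both of which I would reduce to the same two structural facts about $Z(M)$, namely stability under the order (property~(3)) and under passing to subsequences (property~(4)).
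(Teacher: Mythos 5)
Your proposal is correct and follows essentially the same route as the paper's proof: peel the graduated contraction down from $O_{n}(f,x_0)$ to $O_0(f,x_0)$ to assemble $\prod_{i=1}^{n}\lambda_i$ applied to a bound of the orbit (case~\ref{firstSet}), majorize the consecutive distances by $\left(\prod_{i=1}^n\lambda_i\right)(d_{X,M}(x_0,f(x_0)))$ (case~\ref{secondSet}), then use the matching completeness, weak orbital continuity, and the same iteration on a pair of fixed points for uniqueness. Your version merely spells out more explicitly the subsequence and order-domination properties of $Z(M)$ that the paper leaves implicit.
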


{
\begin{remark}
We say that the mappings considered in the first set of conditions \ref{firstSet} and in the second set of conditions \ref{secondSet} of the theorem are sequential \'Ciri\'c and  Caccioppoli type contractions respectively.
\end{remark}
}

\begin{remark}
The following contraction types are partial cases of~\eqref{graduatedContraction} (with all equal to $\lambda$ operators $\lambda_i$):
\begin{enumerate}
    \item For all $x,y\in O(f,x_0)$, $d_{X,M}(f(x),f(y))\le \lambda(d_{X,M}(x,y))$;
    \item For all  $x,y\in O(f,x_0)$ there exists 
    $$
    z\in \{d_{X,M}(x,y), d_{X,M}(x,f(x)), d_{X,M}(y,f(y)), d_{X,M}(x,f(y)), d_{X,M}(f(x),y)\}
    $$
    such that $d_{X,M}(f(x),f(y))\le \lambda(z)$.
\end{enumerate}
\end{remark}

\begin{proof}
Let the first set of conditions holds. We show that $\{x_n = f^n(x_0)\}\in \mathcal{CH}$.
 For  $n,m\in\NN$, $m\ge n$, consider  $d_{X,M}(f^n(x_0),f^m(x_0))$. From the conditions it follows that there exist $k_1,l_1\ge n-1$ such that
$
d_{X,M}(f^n(x_0),f^m(x_0))\le \lambda_n (d_{X,M}(f^{k_1}(x_0),f^{l_1}(x_0))).
$
Then there exist  $k_2,l_2\ge n-2$ such that 
$
d_{X,M}(f^{k_1}(x_0),f^{l_1}(x_0))\le \lambda_{n-1} (d_{X,M}(f^{k_2}(x_0),f^{l_2}(x_0))).
$
And so on, at the $n$-th step there exist  $k_n,l_n\ge 0$ such that 
$$
d_{X,M}(f^{k_{n-1}}(x_0),f^{l_{n-1}}(x_0))\le \lambda_1 (d_{X,M}(f^{k_n}(x_0),f^{l_n}(x_0))).
$$
Thus for the considered distance, we obtain an inequality
$$
d_{X,M}(f^n(x_0),f^m(x_0))\le \left(\prod\nolimits_{i=1}^n\lambda_i\right) (d_{X,M}(f^{k_n}(x_0),f^{l_n}(x_0)))\le \left(\prod\nolimits_{i=1}^n\lambda_i\right)(\alpha)
$$
for some $\alpha\in M_+$, which implies that $\{f^n(x_0)\}\in \mathcal{CH}$.

Let now the second set of conditions holds. Then
$$
d_{X,M}(x_n,x_{n+1})
\leq 
\lambda_n(d_{X,M}(x_{n-1},x_{n}))
 \leq \ldots\leq
\left(\prod\nolimits_{i=1}^n\lambda_i\right)(d_{X,M}(x_{0},x_{1})),
$$
and hence $\{f^n(x_0)\}\in \mathcal{CW}$. 

Therefore in both cases $\{ x_n\}$ has a limit
$
x_n=f(x_{n-1})\to \overline{x},\text{ as } n\to \infty.    
$
The equality $f(\overline{x})=\overline{x}$ holds, since $f$ is weakly orbitally continuous. 

Assume that there are two points $\overline{x},\underline{x}\in X$  such that $f(\overline{x}) =\overline{x}$ and $f(\underline{x}) = \underline{x}$.
 Then for all $n\in \NN$,
$
d_{X,M}(\overline{x},\underline{x})
=
d_{X,M}(f^n(\overline{x}),f^n(\underline{x}))$ and repeating the arguments from the proof of the first case, we obtain that $d_{X,M}(\overline{x},\underline{x})\leq \left(\prod\nolimits_{i=1}^n\lambda_i\right)(\beta),$
where $\beta\in M_+$ is some upper bound of the set $\{d_{X,M}(x,y)\colon x\in  O(f, \overline{x}),y\in  O(f, \underline{x})\}$.
Hence, due to arbitrariness of $n\in\NN$, 
 we obtain $\{d_{X,M}(\overline{x},\underline{x})\}\in Z(M)$, thus $d_{X,M}(\overline{x},\underline{x}) = \theta_M$ and $\overline{x}=\underline{x}$.
\end{proof}
The following two examples show that consideration of a sequence $\{\lambda_i\}$ of operators instead of one (i.e. the case $\lambda_i = \lambda_j$ for all $i,j\in \NN$) might be useful in some situations.

Let $M_+=\RR_+$ and $\lambda_n(t)=\frac n{n+1}t.$  Then $\left(\prod\nolimits_{i=1}^n\lambda_i\right)(t)=\frac 1{n+1}t$ and hence condition~\eqref{lambdaIsNullFunction} of Theorem~\ref{th::contraction1} is satisfied. Note that condition~\eqref{graduatedContraction} with such sequence $\{\lambda_i\}$ guarantees the inequality 
$d_{X,M}(x,y)< d_{X,M}(x',y')$, but generally speaking not the inequality 
\begin{equation}\label{realValuedContraction}
   d_{X,M}(x,y)< \alpha\cdot d_{X,M}(x',y')\text{ with some } \alpha\in (0,1). 
\end{equation}

Let $\lambda_n(t)=\left(\frac n{n+1}\right)^2t. $  Then $\left(\prod\nolimits_{i=1}^n\lambda_i\right)(t)=\left(\frac 1{n+1}\right)^2t$ and condition~\eqref{seriesConvergence} of Theorem~\ref{th::contraction1} holds, but inequality~\eqref{realValuedContraction} generally speaking does not.

\subsection{Sequential contractions in partially ordered M-distance spaces}

\begin{definition}
An $EDM$-distance space $(X,d_{X,M}, Z(M))$ is called partially ordered, if $X$ is a partially ordered set.
\end{definition}

\begin{definition}
We say that $Z(M)$-convergence in a partially ordered  $EDM$-distance space is regular, if the following two conditions are satisfied:
\begin{enumerate}
    \item For each non-decreasing sequence  $\{ x_n\}$ that converges to $x$ one has $x_n\leq x$ for all $n\in\NN$;
    \item  For each non-decreasing sequence $\{x_n\}$ such that $x_n\to x$ as $n\to\infty$, and $x_n \le y$ for all $n\in\NN$, one has $x\leq y$.
\end{enumerate}
\end{definition}
 \begin{definition}
Let $X$ be partially ordered.  We say that a function $f\colon X\to X$ is orbitally semi-continuous (with respect to the partial order $\le$ in $X$), if for each $x_0\in X$ such that $\{ f^n(x_0)\}$ converges to some $x\in X$ one has $f(x)\le x$. 
\end{definition}
 \begin{theorem}\label{th::monotoneMappingContraction}
Let $(X,d_{X,M}, Z(M))$ be a partially ordered Hausdorff $EDM$-distance space with regular $Z(M)$-convergence. Let an operator $f\colon X\to X$ be  non-decreasing and orbitally semi-continuous. Assume that there exists $x_0\in X$ such that $x_0\leq f(x_0)$ and one of the sets \eqref{firstSet} or \eqref{secondSet} of conditions from Theorem~\ref{th::contraction1} holds.
Then the operator $f$ has at least one fixed point, which is the limit of the sequence  $\{x_n = f(x_{n-1})\}$. 

The fixed point is unique, provided the set~\eqref{uniquenessConditions} of conditions from Theorem~\ref{th::contraction1} holds. If $X$ satisfies the Riesz property, then it is enough to require that conditions~\eqref{wContractionB} and~\eqref{boundednessOfOrbitPairs} hold only for orbits $O(f,x_0)$ and $O(f,y_0)$ with pairwise comparable elements.
\end{theorem}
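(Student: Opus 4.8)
The plan is to reuse the convergence machinery of Theorem~\ref{th::contraction1} to produce the limit of the Picard orbit, and then to replace the appeal to weak orbital continuity by an argument combining monotonicity, orbital semi-continuity and regularity of the convergence. First I would observe that $x_0\le f(x_0)$ together with monotonicity of $f$ yields, by induction, that the Picard sequence $x_n=f^n(x_0)$ is non-decreasing: from $x_n\le x_{n+1}$ we get $x_{n+1}=f(x_n)\le f(x_{n+1})=x_{n+2}$. Since one of the sets \eqref{firstSet} or \eqref{secondSet} holds, the part of the proof of Theorem~\ref{th::contraction1} establishing $\{x_n\}\in\mathcal{CH}$ (resp. $\{x_n\}\in\mathcal{CW}$) and hence, by $\mathcal{H}$-completeness (resp. $\mathcal{W}$-completeness), convergence $x_n\to\overline{x}$, applies verbatim, as it uses no continuity of $f$. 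It then remains to show $f(\overline{x})=\overline{x}$. Orbital semi-continuity of $f$ and the convergence $f^n(x_0)\to\overline{x}$ give $f(\overline{x})\le\overline{x}$. For the reverse inequality I would use regularity: by the first regularity condition the non-decreasing sequence $\{x_n\}$ satisfies $x_n\le\overline{x}$ for all $n$, whence $x_{n+1}=f(x_n)\le f(\overline{x})$ by monotonicity; applying the second regularity condition to the non-decreasing sequence $\{x_{n+1}\}$, which still converges to $\overline{x}$ and is bounded above by $f(\overline{x})$, yields $\overline{x}\le f(\overline{x})$. Antisymmetry of the order then gives $f(\overline{x})=\overline{x}$, the announced limit of $\{x_n\}$.

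For uniqueness under the full set \eqref{uniquenessConditions}, nothing new is needed: the uniqueness argument in the proof of Theorem~\ref{th::contraction1} uses only Hausdorffness and the conditions \eqref{uniquenessConditions}, so it carries over unchanged.

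The delicate point is the weakening of \eqref{wContractionB} and \eqref{boundednessOfOrbitPairs} to comparable orbits under the Riesz property, and this is where I expect the main work. Given two fixed points $\overline{x},\underline{x}$, I would form their supremum $w=\overline{x}\vee\underline{x}$. Then $f(w)\ge f(\overline{x})=\overline{x}$ and $f(w)\ge f(\underline{x})=\underline{x}$, so $f(w)\ge\overline{x}\vee\underline{x}=w$; since $f$ is non-decreasing, the orbit $O(f,w)$ is a non-decreasing chain, and $\overline{x}\le w\le f^m(w)$ and $\underline{x}\le w\le f^m(w)$ for all $m$. Hence each of the orbit pairs $(O(f,\overline{x}),O(f,w))$ and $(O(f,\underline{x}),O(f,w))$ — with the first coordinates being the singletons $\{\overline{x}\}$ and $\{\underline{x}\}$ — consists of pairwise comparable elements, so the weakened forms of \eqref{wContractionB} and \eqref{boundednessOfOrbitPairs} may be invoked for them. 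Repeating the chain of inequalities from the uniqueness part of Theorem~\ref{th::contraction1} with the starting points $\overline{x}$ and $w$, and using $f^n(\overline{x})=\overline{x}$, I obtain $d_{X,M}(\overline{x},f^n(w))=d_{X,M}(f^n(\overline{x}),f^n(w))\le\left(\prod_{i=1}^n\lambda_i\right)(\beta)$ for some bound $\beta$ of the corresponding orbit-pair distances; condition \eqref{lambdaIsNullFunction} together with the domination property of $Z(M)$ then forces $\{d_{X,M}(\overline{x},f^n(w))\}\in Z(M)$, i.e. $f^n(w)\to\overline{x}$. The same argument with $\underline{x}$ in place of $\overline{x}$ gives $f^n(w)\to\underline{x}$, and Hausdorffness (uniqueness of limits) yields $\overline{x}=\underline{x}$. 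The crux is precisely this supremum construction, which converts a pair of possibly incomparable fixed points into two comparable orbit pairs to which the weakened hypotheses apply.
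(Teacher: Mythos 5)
Your proposal is correct and follows essentially the same route as the paper's proof: the same convergence argument inherited from Theorem~\ref{th::contraction1}, the same sandwich $f(\overline{x})\le\overline{x}\le f(\overline{x})$ via orbital semi-continuity, monotonicity and the two regularity conditions, and the same supremum construction $w=\underline{x}\vee\overline{x}$ with $f^n(w)$ converging to both fixed points and Hausdorffness closing the argument. Your explicit verification that $O(f,w)$ is a non-decreasing chain comparable with both fixed points is a detail the paper only asserts, but the underlying argument is identical.
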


\begin{proof}

Using the same arguments as in the proof of Theorem~\ref{th::contraction1}, we prove that in both cases, when one of the sets of conditions~\eqref{firstSet} or~\eqref{secondSet} holds, the sequence $\{ x_n = f^n(x_0)\}$ has a limit
$
x_n=f(x_{n-1})\to \overline{x},\text{ as } n\to \infty.    
$

From the conditions of the theorem, we obtain that $x_0 \leq f(x_0) = x_1$, $x_1 = f(x_0)\leq f(x_1) = x_2$, and so on. Hence $x_n\leq x_m$ for arbitrary $m\ge n\in \ZZ_+$. Due to regularity of convergence, we obtain $x_n\le \overline{x}$ for all $n\in \NN$. In virtue of monotonicity of  $f$, one has $f(x_n)\le f(\overline{x})$ for all $n\in \NN$, and hence $x_n\le f(\overline{x})$. Using regularity of convergence once again, we obtain  $\overline{x}\le f(\overline{x})$. On the other hand, using orbital semi-continuity of $f$, we obtain $f(\overline{x})\le \overline{x}$, so that $f(\overline{x})=\overline{x}$. Thus  $\overline{x}$ is a fixed point of $f$. 

If all conditions of the set~\eqref{uniquenessConditions} hold, then the proof of uniqueness of the fixed point goes as in Theorem~\ref{th::contraction1}. Otherwise (when inequality~\eqref{graduatedContraction} holds only for orbits $O(f,x_0)$ and $O(f,y_0)$ with pairwise comparable elements and $X$ satisfies the Riesz   property)  we can obtain the uniqueness as follows.

 Assume that there are two fixed points $\overline{x},\underline{x}$ of the mapping $f$. Consider 
$d_{X,M}(\overline{x}, f^n(\underline{x}\vee \overline{x}))=d_{X,M}(f^n(\overline{x}), f^n(\underline{x}\vee \overline{x}))$. Using the arguments form the proof of Theorem~\ref{th::contraction1} and taking into consideration the fact that the elements of the orbits $O(f,\overline{x})$ (in which all elements are equal to $\overline{x}$) and $O(f,\underline{x}\vee\overline{x})$ are pairwise comparable, we obtain {for each $n\in\NN$}
$$
d_{X,M}(\overline{x}, f^n(\underline{x}\vee \overline{x}))=d_{X,M}(f^n(\overline{x}),f^n(\underline{x}\vee \overline{x}))
\le  \left(\prod\nolimits_{i=1}^n\lambda_i\right)(\beta)
$$
with some $\beta\in M_+$ {independent of $n$}. Hence $\{ d_{X,M}(\overline{x}, f^n(\underline{x}\vee \overline{x}))\}\in Z(M)$. Analogously $\{ d_{X,M}(\underline{x}, f^n(\underline{x}\vee \overline{x}))\}\in Z(M)$.  Since the space $(X,d_{X,M},Z(M))$ is Hausdorff, we obtain $\underline{x}=  \overline{x}$.
\end{proof}

\section{Applications }\label{s::applications}
{
Theorem~\ref{th::M-KContraction} can be applied to different $EDM_\zeta$-metric spaces, and
Theorems~\ref{th::caristiType}, \ref{th::contraction1} and~\ref{th::monotoneMappingContraction} can be applied to different Hausdorff distance spaces, in particular to the ones discussed in Section~\ref{s::examples}. 
It seems that even in the case $M = \RR_+$ and these examples of distance spaces  some of these results will be new. Below we discuss some of such consequences in more details.
}

\subsection{The Fredholm integral equations}\label{s::fredholmEquation}
Let $T$ be a metric compact and $\mu$ be a measure defined on the $\sigma$-algebra of Borel sets of $T$. Let also {an} $L$-space {(the definition can be found in~\cite{Vahrameev, Babenko19, Babenko20,Babenko21})}  $(X,h_X)$ be given, and $C(T,X)$ be the space of continuous functions $x\colon T\to X$. In the space $C(T,X)$ consider a $C(T,\mathbb{R})$-valued metric, setting
$
h_{C(T,X)}(x,y)=h_X(x(\cdot),y(\cdot)),
$
where $C(T,\RR)$ is considered as a partially ordered monoid with pointwise addition and partial order.
It is easy to see that the obtained space $C(T,X)$ is complete whenever $X$ is complete.

We are interested, whether a solution of the equation
$$
x(t)=f(t)+\int_Tg(t,s,x(s))d\mu(s)
$$
exists and is unique, where
$g\in C(T\times T\times X, X)$ and $f\in C(T,X)$ are given and $x$ is to be found.
Assume that for arbitrary $x,y\in C(T,X)$ and $t,s\in T$, 
$$
h_{X}(g(t,s,x(s)),g(t,s,y(s)))\le Q(t,s)h_X(x(s),y(s)),
$$
where $Q\in C(T\times T,\mathbb{R}_+)$. If, for example, $g(t,s,x(s))=K(t,s)x(s)$, where $K\in C(T\times T,\mathbb{R}_+)$, i.e. the considered equation is linear with non-negative kernel, then $Q(t,s)=K(t,s)$. 

For the operator $A\colon C(T,X)\to C(T,X)$
$$
Ax(\cdot)=f(\cdot)+\int_Tg(\cdot,s,x(s))d\mu(s)
$$
we have
$$
h_{C(T,X)}(Ax,Ay)\le \lambda(h_{C(T,X)}(x,y)),
$$
where $\lambda\colon C(T,\RR_+)\to C(T,\RR_+)$ is the linear integral operator with the kernel $Q(t,s)$:
$$
\lambda(x(\cdot))(t)=\int_TQ(t,s)x(s)d\mu(s).
$$
Setting $Q_1(t,s)=Q(t,s)$ and $Q_n(t,s)=\int_TQ_{n-1}(t,u)Q_1(u,s)d\mu(u)$ for $n\in\mathbb{N}$, we obtain
$$
\lambda^n(x(\cdot))(t)=\int_TQ_n(t,s)x(s)d\mu(s).
$$
Theorem~\ref{th::contraction1} (conditions~\eqref{secondSet} with all equal $\lambda_i$) is applicable to the operator $A$ if for each function $x\in C(T,X_+)$ the series
$
\sum\nolimits\nolimits_{n=1}^\infty \int_TQ_n(t,s)x(s)d\mu(s)
$
converges in  $(C(T,X),h_{C(T,X)})$.
It is easy to see that it is sufficient to require that the series
$
\sum\nolimits\nolimits_{n=1}^\infty \int_TQ_n(t,s)d\mu(s)
$
converges in the space $(C(T,X),h_{C(T,X)})$. Thus, if the latter series converges, then the considered Fredholm equation has a unique solution in the space $C(T,X)$. It is easy to see that this condition is significantly more general than the one from~\cite{Babenko19}.

 \subsection{Multiple fixed point theorems}
 We use a generalization of an approach to the multiple fixed point theorems suggested in~\cite{ChobanBerinde}.
Let $(Y,d_{Y,M})$ be a partially ordered $M$-distance space, $A$ be some non-empty set of indices and $P\colon A\to  \{0,1\}$ be given. Define a partial order in the Cartesian product  $Y^A$, setting for $x, y\in Y^A$ (i.e. $x,y\colon A\to Y$)
\begin{multline*}
x\preccurlyeq_{P} y \text{ if and only if }  x(\alpha)\le y(\alpha) \text{ for } \alpha\in A \text{ such that } P(\alpha) = 0, 
\\
\text{ and } y(\alpha)\le x(\alpha)\text{ for other } \alpha\in A.
\end{multline*} 
We say that a mapping $f\colon Y^A\to Y^A$ is $P$-non-decreasing, if
$
x\preccurlyeq_{P} y\implies f(x)\preccurlyeq_{P} f(y).
$

Let mappings
$\sigma\colon A^2\to A$
 and $f\colon Y^A\to Y$  be given. These mappings generate the mapping
$\sigma f\colon Y^A\to Y^A$
in the following way:
$$\sigma f(x)=\alpha\mapsto f(x\circ \sigma(\alpha, \cdot)), x\in Y^A.$$

\begin{definition}
An element $x\in Y^A$ is called a $\sigma$-multiple fixed point of the mapping $f\colon Y^A\to Y$, if $x= \sigma f(x)$.
\end{definition}

Let $Z(M)$ be a family of null sequences in the partially ordered monoid $M$. As in the last case of Example~\ref{ex::CartesianProducts} from Section~\ref{s::examples} in the space $Y^A$ we introduce an $M^A$-valued distance $d_{Y^A, M^A}$ and a family $Z(M^A)$ of null sequences in $M^A$.
With the partial order $\preccurlyeq_P$ and such distance function we obtain a partially ordered distance space $(Y^A, d_{Y^A,M^A},Z(M^A))$.

 Let $(Y,d_{Y,M}, Z(M))$ be a partially ordered distance space with partial order $\leq$. We say that $Z(M)$-convergence in it is co-regular, if it is regular with respect to the opposite partial order $\geq$.

 \begin{theorem}
Let $(Y,d_{Y,M}, Z(M))$ be a partially ordered $\mathcal{W}$-complete Hausdorff $EDM$-distance space with  both regular and co-regular $Z(M)$-convergence. Let $A$ be some non-empty set of indices, $P\colon A\to \{0,1\}$ and $\sigma\colon A^2\to A$ be some functions, and an operator $g\colon Y^A\to Y$ be such that $\sigma g$ is $P$-non-decreasing and orbitally semi-continuous. Assume that there exists $x^0\in Y^A$ such that $x^0\preccurlyeq_P \sigma g(x^0)$ and $\lambda\colon M^A\to M^A$ is $P$-non-decreasing and such that 
$$
x\preccurlyeq_{P} y\implies d_{Y^A, M^A}(\sigma g(x), \sigma g(y)) \leq \lambda (d_{Y^A, M^A}(x,y)),
$$
and
$$
\{\lambda^n(d_{Y^A, M^A}(x^0, \sigma g(x^0))\}\in \mathcal{CW}.
$$
Then the operator $g$ has a $\sigma$-multiple fixed point, which is the limit of the sequence  $\{x^n = \sigma g(x^{n-1})\}$.
\end{theorem}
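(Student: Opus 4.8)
The plan is to deduce the statement directly from Theorem~\ref{th::monotoneMappingContraction}, applied to the operator $F:=\sigma g$ acting on the product space $(Y^A,d_{Y^A,M^A},Z(M^A))$ endowed with the order $\preccurlyeq_P$. Since a fixed point of $F$ is precisely an $x\in Y^A$ with $x=\sigma g(x)$, that is, a $\sigma$-multiple fixed point of $g$, and since Theorem~\ref{th::monotoneMappingContraction} exhibits the fixed point as the limit of $\{x^n=F(x^{n-1})\}$, it suffices to check that this product space and the map $F$ meet every hypothesis of that theorem. First I would assemble the structural properties of the product space. By the final part of Example~\ref{ex::infiniteProduct} (together with the last case of Example~\ref{ex::CartesianProducts}), $(Y^A,d_{Y^A,M^A},Z(M^A))$ is a Hausdorff $EDM$-distance space, since $Y$ is Hausdorff and $Z(M^A)$ is coordinate-wise convergence. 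For $\mathcal{W}$-completeness I would take $\{x^n\}\in\mathcal{CW}$ and note that, because $Z(M^A)$ and partial sums in $M^A$ are coordinate-wise, each series $\sum_n d_{Y,M}(x^n(\alpha),x^{n+1}(\alpha))$ is a Cauchy series in $M$; hence $\{x^n(\alpha)\}$ lies in $\mathcal{CW}$ for $Y$ and converges to some $x(\alpha)$ by $\mathcal{W}$-completeness of $Y$, so that $x:=(x(\alpha))_{\alpha}$ is the coordinate-wise, hence $Z(M^A)$, limit of $\{x^n\}$.

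The crucial step is to show that $Z(M^A)$-convergence is regular with respect to $\preccurlyeq_P$, and this is exactly where both regularity and co-regularity of $Y$ are used. A $\preccurlyeq_P$-non-decreasing sequence $\{x^n\}$ is, coordinate-wise, $\le$-non-decreasing on the indices $\alpha$ with $P(\alpha)=0$ and $\le$-non-increasing (equivalently, non-decreasing for the opposite order) on the indices with $P(\alpha)=1$, while convergence in $Z(M^A)$ is coordinate-wise. Applying the first regularity condition of $Y$ on the coordinates with $P(\alpha)=0$ and the first co-regularity condition (regularity for the opposite order) on the coordinates with $P(\alpha)=1$ yields $x^n\preccurlyeq_P x$; the identical coordinate-wise splitting applied to the second (co-)regularity condition gives the second regularity requirement. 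Thus $(Y^A,\preccurlyeq_P)$ has regular $Z(M^A)$-convergence.

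It then remains to transcribe the hypotheses on $F=\sigma g$. That $F$ is $\preccurlyeq_P$-non-decreasing is the $P$-non-decreasingness assumption, while orbital semi-continuity and the existence of $x^0$ with $x^0\preccurlyeq_P F(x^0)$ are assumed outright. Finally I would invoke the second set~\eqref{secondSet} of conditions of Theorem~\ref{th::contraction1} with all $\lambda_i=\lambda$: because $x^0\preccurlyeq_P F(x^0)$ and $F$ is $\preccurlyeq_P$-monotone, the iterates form a $\preccurlyeq_P$-increasing chain, so each consecutive pair is comparable and the assumed contraction inequality gives $d_{Y^A,M^A}(F^n(x^0),F^{n+1}(x^0))\le\lambda\bigl(d_{Y^A,M^A}(F^{n-1}(x^0),F^n(x^0))\bigr)$, which is the recursive estimate in~\eqref{secondSet}; condition~\eqref{seriesConvergence} is exactly the assumed $\{\lambda^n(d_{Y^A,M^A}(x^0,\sigma g(x^0)))\}\in\mathcal{CW}$. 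Theorem~\ref{th::monotoneMappingContraction} then produces a fixed point $\overline{x}=\lim_n x^n$ of $F$, i.e. the desired $\sigma$-multiple fixed point of $g$.

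I expect the main obstacle to be the regularity/co-regularity translation of the second paragraph: since $\preccurlyeq_P$ reverses the order on the $P=1$ coordinates, neither regularity nor co-regularity of $Y$ alone suffices, and one must split the index set and feed each part the appropriate half of the hypotheses. A secondary bookkeeping point is to confirm that $\lambda$, assumed $P$-non-decreasing, genuinely serves as the coordinate-wise non-decreasing operator $\lambda_i$ demanded by Theorem~\ref{th::contraction1} when acting on the non-negative distance values; reconciling these two notions of monotonicity on the positive cone is the one place where care is required, and it governs the iteration hidden inside the proof of Theorem~\ref{th::contraction1}.
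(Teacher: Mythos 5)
Your proposal follows the same route as the paper's proof: both reduce the statement to Theorem~\ref{th::monotoneMappingContraction} (with the set~\eqref{secondSet} of conditions from Theorem~\ref{th::contraction1}, all $\lambda_i=\lambda$) applied to $F=\sigma g$ on the product space $(Y^A,d_{Y^A,M^A},Z(M^A))$ with the order $\preccurlyeq_P$, after observing that $\mathcal{W}$-completeness, Hausdorffness and regularity of the convergence are inherited coordinate-wise. You simply spell out the details --- in particular the splitting of the index set by $P(\alpha)$ that turns regularity plus co-regularity of $Y$ into regularity of $(Y^A,\preccurlyeq_P)$ --- which the paper states without elaboration.
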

\begin{proof}
Due to the coordinate-wise nature of the convergence in the space $(Y^A, d_{Y^A, M^A}, Z(M^A))$, it is $\mathcal{W}$-complete (Hausdorff), provided the space $(Y, d_{Y, M},Z(M))$ is $\mathcal{W}$-complete (resp. Hausdorff). Moreover, due to the definition of the partial order $\preccurlyeq_P$, the $Z(M^A)$ convergence in $(Y^A, d_{Y^A, M^A}, Z(M^A))$ is regular  (and co-regular), provided the $Z(M)$-convergence in $(Y,d_{Y,M},Z(M))$ is regular and co-regular. Hence we can apply Theorem~\ref{th::monotoneMappingContraction} (its case when conditions~\ref{secondSet} hold) to the partially ordered distance space $(X,d_{X,M},Z(M)) = (Y^A, d_{Y^A, M^A}, Z(M^A))$ and $f = \sigma g$. \end{proof}
\subsection{Random fixed point theorems}
Let $(X, d_{X,M}, Z(M))$ be a Hausdorff $EDM$-distance space and  $\Omega$ be some non-empty set. 
Let the point-wise convergence in  $X^\Omega$ be defined and some class of functions $\mathfrak{M}\subset X^\Omega$ that is closed with respect to the point-wise convergence be given.

We consider a mapping $F\colon \Omega\times X\to X$ that satisfies the following properties:
\begin{enumerate}[label={\arabic*)}]
    \item For all $ x\in X$, $F(\cdot,x)\in \mathfrak{M}$;
    \item For all  $\omega\in \Omega$ there exists $x_0(\omega)\in X$ such that the sequence of iterations
    $\{ x_n(\omega)\}$,
    $$
    x_1(\omega)=F(\omega,x_0(\omega)),x_2(\omega)=F(\omega,x_1(\omega)),\ldots,x_n(\omega)=F(\omega,x_{n-1}(\omega)),\ldots,
    $$
    belongs to  $\mathfrak{M}$ and converges to a fixed point of the mapping $F(\omega,\cdot)$ .
\end{enumerate}
Then there exists a function $\overline{x}(\cdot)\in\mathfrak{M} $ such that for all $\omega\in \Omega$,
\begin{equation}\label{randomFixedPoint}
\overline{x}(\omega)=F(\omega,\overline{x}(\omega)).
\end{equation}
Condition 2) holds for example in the case, when for all  $\omega\in \Omega$ the function $F(\omega,\cdot)$ satisfies the conditions of one of the theorems from Section~\ref{s::contractionTheorems}.

The function that satisfies~\eqref{randomFixedPoint} can be called a $\mathfrak{M}$-fixed point of $F$. In the case, when $M=\RR_+$, some measurable space $(\Omega, U)$ is given and $\mathfrak{M}$ is the set of measurable functions, then we obtain a result regarding random fixed point theorems. See e.g.~\cite{Nietoatall} and references therein.

\bibliographystyle{plain}
\bibliography{bibliography}

\begin{thebibliography}{10}

\bibitem{Arandelovichatall}
S.~Alshehri, I.~Arandelovi\'c, and N.~Shahzad.
\newblock Symmetric spaces and fixed points of generalized contractions.
\newblock {\em Abstr Appl Anal}, 2014(763547), 2014.

\bibitem{Kucha}
E.~Ameer, H.~Aydi, H.~A. Hammad, W.~Shatanaw, and N.~Mlaiki.
\newblock On $(\phi,\psi)$-metric spaces with applications.
\newblock {\em Symmetry}, 12(20):1459, 2020.

\bibitem{Arandelovic}
I.~D. Arandelovi\'c and Ke\v{c}ki\'c.
\newblock Symmetric spaces approach to some fixed point results.
\newblock {\em Nonlinear Analysis: Theory, Methods and Applications},
  75(13):5157--5168, 2012.

\bibitem{Babenko19}
V.~Babenko.
\newblock Calculus and nonlinear integral equations for functions with values
  in {L}-spaces.
\newblock {\em Anal Math}, 45:727--755, 2019.

\bibitem{Babenko20}
V.~Babenko, V.~Babenko, and O.~Kovalenko.
\newblock Optimal recovery of monotone operators in partially ordered
  {L}-spaces.
\newblock {\em Numer. Funct. Anal. Optim.}, 41(11):1373--1397, 2020.

\bibitem{fmDistSpaces}
V.~Babenko, V.~Babenko, and O.~Kovalenko.
\newblock Fixed point theorems in {M}-distance spaces.
\newblock arxiv.org/abs/2103.13914, 2021.

\bibitem{Babenko21}
V.~Babenko, V.~Babenko, O.~Kovalenko, and M.~Polishchuk.
\newblock Optimal recovery of operators in function {L}-spaces.
\newblock {\em Anal Math}, 47:13--32, 2021.

\bibitem{carl2014fixed}
S.~Carl and S.~Heikkil{\"a}.
\newblock {\em Fixed Point Theory in Ordered Sets and Applications: From
  Differential and Integral Equations to Game Theory}.
\newblock Springer New York, 2014.

\bibitem{Choban}
M.~Choban.
\newblock Fixed points of mappings defined on spaces with distance.
\newblock {\em Carpathian J. Math}, 32(2):173 -- 188, 2016.

\bibitem{ChobanBerinde}
M.~Choban and V.~Berinde.
\newblock A general concept of multiple fixed point for mappings defined on
  spaces with distance.
\newblock {\em Carpathian J. Math}, 33(3):275 -- 286, 2017.

\bibitem{Engelking}
R.~Engelking.
\newblock {\em General topology}.
\newblock Panstwowe Wydavnictwo Naukowe, Warszawa, 1977.

\bibitem{Frechet06}
M.~Fr\'{e}chet.
\newblock Sur quelques points du calcul fonctionnel.
\newblock {\em Rendiconti del Circolo Matematico di Palermo}, 22, 1906.

\bibitem{Granas}
A.~Granas and J.~Dugundji.
\newblock {\em Fixed Point Theory}.
\newblock Cambridge Tracts in Math. Cambridge Univ. Press, 2004.

\bibitem{HadzicPap}
O.~Hadzic and E.~Pap.
\newblock {\em Fixed Point Theory in Probabilistic Metric Spaces}.
\newblock Springer Science+Business Media Dordrecht, 2001.

\bibitem{Hildebrandt}
T.~H. Hildebrandt.
\newblock A contribution to the foundations of {F}r\'{e}chet's calcul
  fonctionnel.
\newblock {\em Amer. J. Math.}, 34(3):237--290, 1912.

\bibitem{Hitzler}
P.~Hitzler and A.~K. Seda.
\newblock Dislocated topologies.
\newblock {\em J. Electr. Eng.}, 51:3--7, 2000.

\bibitem{Husseinatall}
N.~Hussein, S.~Khaleghizadeh, P.~Salimi, and Afrah A.~N. Abdou.
\newblock A new approach to fixed point results in triangular intuinistic fuzzy
  metric spaces.
\newblock {\em Abstr. Appl. Anal.}, 2014(690139), 2014.

\bibitem{Jancovic}
S.~Jankovi\'{c}, Z.~Kadelburg, and S.~Radenovi\'{c}.
\newblock On cone metric spaces: A survey.
\newblock {\em Nonlinear Anal Theory Methods Appl}, 74(7):2591 -- 2601, 2011.

\bibitem{Jleli}
M.~Jleli and B.~Samet.
\newblock A generalized metric space and related fixed point theorems.
\newblock {\em Fixed Point Theory Appl}, 2015(61), 2015.

\bibitem{JS2}
M.~Jleli and B.~Samet.
\newblock On a new generalization of metric spaces.
\newblock {\em Fixed Point Theory Appl}, 2018(128), 2018.

\bibitem{Kantorovich}
L.~V. Kantorovich.
\newblock To the general theory of operations in semiordered spaces.
\newblock {\em Dokl. Akad. Nauk SSSR}, 1(7):271--274, 1936.

\bibitem{KVP}
L.~V. Kantorovich, B.~Z. Vulikh, and A.~G. Pinsker.
\newblock {\em Functional analysis in semiordered spaces}.
\newblock Gostekhizdat, Moscow, 1950.

\bibitem{Kelley}
J.~L. Kelley.
\newblock {\em General Topology}.
\newblock Graduate Texts in Mathematics. Springer New York, 1975.

\bibitem{Kirk}
W.~Kirk and N.~Shahzad.
\newblock {\em Fixed Point Theory in Distance Spaces}.
\newblock Springer New York, 2014.

\bibitem{kirk_shahzad}
W.~A. Kirk and N.~Shahzad.
\newblock Generalized metrics and {C}aristi's theorem.
\newblock {\em J Fixed Point Theory Appl}, 2013(129), 2013.

\bibitem{Kurepa}
G.~Kurepa.
\newblock Tableaux ramifies d'ensembles. {E}spaces pseudodistancies.
\newblock {\em C. R. Acad. Sci. Paris, Ser. A}, 198:1563--1565, 1934.

\bibitem{Kusraev}
A.~G Kusraev.
\newblock Kantorovich spaces and the metrization problem.
\newblock {\em Sibirsk. Mat. Zh.}, 34(4):108--116, 1993.

\bibitem{Meir69}
A.~Meir and E.~Keeler.
\newblock A theorem on contraction mappings.
\newblock {\em J Math Anal Appl}, 28:326--329, 1969.

\bibitem{Nietoatall}
J.~J. Nieto, A.~Ouahab, and R.~Rodriguez-Lopez.
\newblock Random fixed point theorems in partially ordered metric spaces.
\newblock {\em Fixed Point Theory Appl}, 2016(98), 2016.

\bibitem{Pitcher}
A.~D. Pitcher and E.~W. Chittenden.
\newblock On the foundations of the calcul fonctionnel of {F}r\'{e}chet.
\newblock {\em Trans. Amer. Math. Soc.}, 19(1):66–78, 1918.

\bibitem{Radenovic_Kadelburg}
S.~Radenovic and Z.~Kadelburg.
\newblock Quasi-contractions on symmetric and cone symmetric spaces.
\newblock {\em Banach J Math Anal}, 5(1):38--50, 2011.

\bibitem{Rakic}
D.~Raki\'c, T.~De\v{s}enovi\'c, Z.~D. Mitrivi\'c, M.~de~la Sen, and
  S.~Radenovo\'c.
\newblock Some fixed point theorems of \'ciri\'c type in fuzzy metric spaces.
\newblock {\em Mathematics}, 2020(8(2)):297, 2020.

\bibitem{Roldan}
A.~Roldan and N.~Shahzad.
\newblock Fixed point theorems by combining {J}leli and {S}amet's, and
  {B}ranciari's inequalities.
\newblock {\em J Nonlinear Sci Appl}, 9(6):3822--3849, 2016.

\bibitem{Schweizer}
B.~Schweizer and A.~Sklar.
\newblock {\em Probabilistic metric spaces}.
\newblock North-Holland, Amsterdam, 1983.

\bibitem{Suzuki}
T.~Suzuki.
\newblock Fixed point theorems for contractions in semicomplete semimetric.
\newblock {\em Carpathian J. Math}, 34(2):269--276, 2018.

\bibitem{Taylor}
W.~W. Taylor.
\newblock Fixed-point theorems for nonexpansive mappings in linear topological
  spaces.
\newblock {\em J Math Anal Appl}, 40(1):164 -- 173, 1972.

\bibitem{Vahrameev}
S.~A. Vahrameev.
\newblock {\em Applied Mathematics and Mathematical Software of Computers}.
\newblock M.: MSU Publisher, 1980.
\newblock (in Russian).

\bibitem{VillaMorales}
J.~Villa-Morales.
\newblock Subordinate semimetric spaces and fixed point theorems.
\newblock {\em J Math}, 2018(7856594), 2018.

\bibitem{Wilson}
W.~A. Wilson.
\newblock On semi-metric spaces.
\newblock {\em Am J Math}, 53(2):361--373, 1931.

\bibitem{Wong}
J.~S.~W. Wong.
\newblock Mappings of contractive tipe on abstract spaces.
\newblock {\em Journ. of Mathematical Analysis and Applications}, 37:331--340,
  1972.

\bibitem{Yadav}
N.~Yadav, P.~K. Tripathi, and S.~L. Maurya.
\newblock Fixed-point theorems in intuitionistic fuzzy metric spaces.
\newblock {\em J Math Comput Sci}, 11(2):1305--1311, 2021.

\bibitem{Zabrejko}
P.~P. Zabrejko.
\newblock K-metric and {K}-normed linear spaces: survey.
\newblock {\em Collect Math}, 48(4-5-6):825--859, 1997.

\bibitem{Zhang}
Y.~Zhang.
\newblock Fixed point theorems in cone symmetric spaces.
\newblock {\em Chin Quart J Math}, 30(2):280--286, 2015.

\end{thebibliography}

\end{document}